\documentclass[leqno,12pt,draft]{article}

\usepackage{tikz}
\usepackage[a4paper]{geometry}
\usepackage{graphicx}
\usepackage{amsfonts,amssymb,amsthm}
\usepackage[centertags]{amsmath}
\allowdisplaybreaks[1]
\usepackage{caption}
\usepackage{paralist}
\usepackage{subfigure}
\usepackage{amsthm} 
\newcounter{theorems}

\theoremstyle{plain}

\newtheoremstyle{par}
     {\topsep}
     {\topsep}
     {\itshape}
     {}
     {\bfseries}
     {}
     {.5em}
     {}

\newtheoremstyle{parrm}
     {\topsep}
     {\topsep}
     {\normalfont}
     {}
     {\itshape}
     {}
     {.5em}
     {}

\theoremstyle{par}
\theoremstyle{parrm}
\numberwithin{equation}{section}
\swapnumbers
\makeatletter
\def\tagform@#1{\maketag@@@{\ignorespaces#1\unskip\@@italiccorr}}

\makeatother
\usepackage[iso,nocleanlook,english]{isodate}
\usepackage[autostyle]{csquotes}
\usepackage[mathstyleoff]{breqn} 
\usepackage[all,pdf,curve,cmtip]{xy}
\newdir{ >}{!/10pt/@{ }*@{>}}
\newdir{< }{!/10pt/@{ }*@{<}}
\newcommand{\term}[1]{\emph{#1}}

\newcommand{\RR}{\mathbb{R}}

\newcommand{\from}{\colon}

\usepackage{bm}
\newcommand{\simbolovettore}[1]{{\boldsymbol{#1}}}

\newcommand{\vm}{\simbolovettore{m}}
\newcommand{\vn}{\simbolovettore{n}}

\newcommand{\vq}{\simbolovettore{q}}

\newcommand{\vu}{\simbolovettore{u}}
\newcommand{\vv}{\simbolovettore{v}}

\newcommand{\zero}{\simbolovettore{0}}

\newcommand{\abs}[1]{\lvert{#1}\rvert}

\newcommand{\tT}{\mkern-1.5mu\mathsf{T}}

\newcommand{\pfaff}{\operatorname{Pf}}

\usepackage{enumitem}
\providecommand{\varitem}{} 
\makeatletter
\newenvironment{axioms}[1]
 {\renewcommand\varitem[1]{\item[\textbf{#1\arabic{enumi}\rlap{$##1$}:}]%
    \edef\@currentlabel{#1\arabic{enumi}{$##1$}}}%
  \enumerate[nolistsep,label=\textbf{#1\arabic*:}, ref=#1\arabic*, start=1]}
 {\endenumerate}
\makeatother

\newcommand{\aref}[1]{\textbf{\textup{\ref{#1}}}}

\newcommand{\conf}[2]{\mathbb{F}_{#1}(#2)}
\usepackage[english]{babel}
\theoremstyle{plain}
\newtheorem{lemma}[equation]{Lemma}
\newtheorem{theo}[equation]{Theorem}
\newtheorem{coro}[equation]{Corollary}
\newtheorem{propo}[equation]{Proposition}

\theoremstyle{par}
\theoremstyle{definition}
\newtheorem{defi}[equation]{Definition}
\newtheorem{remark}[equation]{Remark}
\allowdisplaybreaks

\usepackage[natbib=false,backend=bibtex,style=numeric,doi=false,isbn=false,url=false]{biblatex}
\title{On Certain Pfaffians Connected with the Inverse Problem for Collinear Central Configurations}
\author{DL~Ferrario}
\date{\isodate\today}

\newcommand{\quadmat}{\mathcal{M}}
\newcommand{\matrices}{\quadmat}
\newcommand{\bmatrices}{\mathcal{B}}
\newcommand{\De}[2][x]{\dfrac{\partial #2}{\partial #1}}

\begin{document}
\selectlanguage{english}

\maketitle

\begin{abstract}
A. Albouy and R. Moeckel in 2000 found some interesting inequalities related to 
the inverse problem for
collinear (Moulton) central configurations: the Pfaffian 
of a certain matrix is positive since all coefficients of some polynomials 
are positive, for the Newtonian (interaction potential $1/r$ and even $n\leq 6$). 
They conjectured that for all even $n$ such Pfaffians, for the Newtonian case, are positive. 
In this article we analyze further the problem, and we prove that these Pfaffians are positive 
for every even $n\leq 14$, for a broader class of interaction potentials ($\varphi$):
namely, we assume that the function $V=-\varphi'$ is positive, decreasing, and log-convex.  
This class includes all homogeneous potentials ($\varphi(t)=|t|^{-\alpha}$)
of negative homogeneity degree ($-\alpha$, with $\alpha>0$), and their finite sums. 
In doing so, the main tool is 
a family $p_n(u)$ of Pfaffian polynomials, 
that (for all even $n\leq 14$) happen to have only non-negative coefficients, and
with the property that $p_n(u)>0$. 
This yields an explicit positive lower bound for the relevant Pfaffians and, in
particular, the nonsingularity of all the corresponding matrices.

\noindent \emph{AMS 2020 Mathematics Subject Classification}: 
70F10 
15A45 

\noindent \emph{Keywords}: $n$-body problem, central configuration, Pfaffian, 
inverse problem for central configurations, Albouy-Moeckel conjecture.
\end{abstract}

\section{Introduction}
\label{section:intro}

In \cite{AlbouyInverseProblemCollinear2000}, A. Albouy and R. Moeckel considered the 
inverse problem for collinear central configurations: given a collinear configuration 
of $n$ point masses, find the value of the masses making it central. 
In the study of the resulting Pfaffians, a remarkable property was found: each term in the 
polynomials occurring in the rational function expressing the Pfaffian for even $n\leq 6$ was 
positive. Hence the Pfaffians were proven to be positive. The authors conjectured that 
for the Newtonian potential and all even $n$, all resulting Pfaffians are nonzero, for all non-singular configurations 
(this is now known as the Albouy-Moeckel conjecture).
As observed at the end of \cite{AlbouyInverseProblemCollinear2000}, 
A. Chenciner pointed out the old 
paper \cite{Buchanancertaindeterminantsconnected1909} of H.E. Buchanan, 
which claims to prove their conjecture, with an incorrect (and probably unrecoverable) argument.
Since Albouy-Moeckel proof for $n=6$ and $\alpha=1$ was computer-assisted,  
Xie in  \cite{xieAnalyticalProofCertain2014a}
established an analytical proof
(see also \cite{xieRemarksInverseProblem2022b} for a survey of related results). 
In \cite{ferrarioPfaffiansInverseProblem2020c} and 
\cite{ferrarioMultivaluedFixedPoints2020a} I tried to understand better the reason 
of the positivity of the Pfaffians, and to improve the results about the conjecture; 
I could prove
that the resulting Pfaffians are positive for any homogeneity $\alpha >0$ 
and even $n\leq 6$, or for $\alpha=1$ and even $n \leq 10$ with a computer-assisted proof.

Such results can be improved in two directions: by considering more 
general interacting potentials, and by increasing the value of $n$ (even). 
In order to do so, two main ingredients have been introduced. The first, is 
the candidate space of some anti-symmetric matrices, constrained to 
some quadratic inequalities. The space,
defined below in \ref{defi:T}, is meant to have the property that all its elements 
have positive Pfaffian. 
In order to prove it, we introduce the second main ingredient: a suitable
parametrization 
 of the space of matrices, defined in \ref{defi:vij} and \ref{defi:uij}. 
 In terms of some of these parameters, we again find the remarkable fact that the Pfaffian 
 is a polynomial with positive coefficients.
 So, after an introductory section \ref{section:conj}, 
 in section \ref{section:2} we introduce 
the space $\matrices_n$ of all $n\times n$ anti-symmetric matrices with positive upper-right entries,
satisfying some simple inequalities on their coefficients. 
Such matrices can be manipulated and simplified, eliminating $n$ entries. The space
 $\bmatrices_n$ of all such simplified matrices 
 can be parametrized \ref{defi:vij} with $\frac{n(n-3)}{2}$ 
variables $v_{ij}$, as described in proposition \ref{propo:vij}. In terms of the new variables,
the inequalities defining $\matrices_n$ and $\bmatrices_n$ can be re-written
simply as $v_{ij} \geq 1$ for all $i,j$. 
By letting $v_{ij} = 1 + u_{ij}$, the Pfaffian of a generic matrix in $\bmatrices_n$
is a polynomial in the variables 
$u_{ij}$: even if it is an alternating sum of terms, after simplifications what 
turns out to be true is that all its non-zero coefficients are positive. 
The proof for $n\leq 6$ is simple. In order to prove it for (even) $n\geq 8$, a computer-assisted 
proof has been necessary. This is the content of section \ref{section:3}, where 
the algorithm is described. Here the idea is basically that instead of computing all the coefficients 
of the polynomials in $u_{ij}$-variables, it is possible to eliminate the $v_{ij}$-variables one after the other, 
using some suitable technical lemmas, without actually computing all the
coefficients of the polynomial in $u_{ij}$. 
The mathematical and computing problems are highlighted in section \ref{section:3}. 
Finally, in section \ref{section:5} we apply the previous results to
central configurations and the Albouy-Moeckel conjecture:
the goal is to show that matrices occurring from the Newtonian potential, or a homogeneous potential
of homogeneity degree $-\alpha$, are always elements of the space of matrices $\matrices_n$ 
described above. This turns out to be easily proven (Theorem \ref{theo:pfaffQ}). 
But, more interestingly, a much wider class of potentials have the property that the corresponding
matrix belongs to $\matrices_n$: it suffices that the entries of the matrix can be written as 
$Q_{ij} = V(q_i -q_j)$, 
where $V$ is  a function which is monotone decreasing
and log-convex (here $(q_1,\ldots, q_n)\in \RR^n$ is a non-singular collinear configuration).  
This is proven in \ref{theo:2}. 
From these theorems 
one can deduce Corollaries \ref{coro:main} and \ref{coro:alphahomo},
that gives explicit estimates of the value of the (positive) Pfaffians.

\section{Albouy-Moeckel Conjecture and the Inverse Problem for Collinear Central Configurations}
\label{section:conj}

Let $n\geq 2$. Consider the configuration space of $n$ points in $\RR$,  denoted as
\( \conf{n}{\RR} = \{ \vq \in \RR^n : q_i \neq q_j \} \)
where $\vq = (q_1,q_2,\ldots, q_n) \in \RR^n$ and $\forall j$, $q_j \in \RR$. 
Given 
$n$ positive 
mass parameters $m_1,\ldots, m_n$,  
the potential function $U\from \conf{n}{\RR} \to \RR$ is defined as 
\begin{equation}
\label{eq:potential}
U(\vq)  
= 
\sum_{1\leq i \leq j \leq n} {m_i m_j} \varphi(q_i - q_j),
\end{equation}
where  $\varphi\from \RR \to \RR$ is a given even function.
If 
$\varphi(t) = \abs{t}^{-\alpha}$ for $\alpha >0$, 
then $U$ is the $\alpha$-homogeneous potential 
\begin{equation}
\label{eq:potentialalpha}
U(\vq) =  \sum_{1\leq i \leq j \leq n} \frac{m_i m_j}{\abs{q_i - q_j}^\alpha},
\end{equation}
which is the Newtonian gravitational potential for $\alpha=1$. 

Let $q_0 = \dfrac{\sum_{j=1}^n m_j q_j}{\sum_{j=1}^n m_j }$ be the center of mass of $\vq$. 
A \term{central configuration} for $U$ 
is a configuration $\vq \in \conf{n}{\RR}$ 
which is  a solution of the equation (in $(\vq,\lambda)$)
\begin{equation}\label{eq:CC0}
\De[q_j]{U} 
=
\sum_{\substack{k=1\\k\neq j} }^n m_j m_k \varphi'(q_j - q_k)
= \lambda m_j (q_j - q_0) \quad(j=1\ldots n),
\end{equation}
which in the $\alpha$-homogeneous case is 
\begin{equation*}
 \alpha \sum_{\substack{k=1\\k\neq j}}^n m_j m_k \dfrac{q_j - q_k}{\abs{q_j - q_k}^{\alpha+2}}  + 
\lambda m_j (q_j - q_0)  = 0 \quad (j=1\ldots n)~.
\end{equation*}
By setting
\begin{equation}
\label{eq:defQ}
Q_{ij} = 
-\varphi'(q_i - q_j)
=
\alpha \frac{q_i - q_j}{\abs{q_i - q_{j}}^{\alpha+2} }
\end{equation}
and eliminating some $m_j$, equation \ref{eq:CC0} can be written as
\begin{equation}\label{eq:CC2}
 \sum_{k\neq j}  Q_{jk} m_k   + 
\lambda (q_j -q_0)  = 0 \quad (j=1\ldots n),
\end{equation}
which can be written in matrix form as $Q\vm + \lambda (\vq - q_0 \boldsymbol{1}) = \zero$, 
where $Q$ is the skew-symmetric matrix with entries $Q_{jk}$, $\vm$ the vector
with components $m_i$ and $\boldsymbol{1}$ the vector with all components equal to $1$. 
Note that $V(t) = -\varphi'(t)$ is an odd function. 

By \cite[Lemma 1.5]{ferrarioPfaffiansInverseProblem2020c}, 
equation \eqref{eq:CC2} can be written as $n$ equations in the 
$n+1$ variables $(\hat M, \hat c)$, in matrix form
$Q\hat m + \hat c \boldsymbol{1} = \vq$. 
The \emph{inverse problem}—originally formulated by Moulton
\cite{Moultonstraightlinesolutions1910} and Buchanan
\cite{Buchanancertaindeterminantsconnected1909}, and later advanced by Albouy 
and Moeckel \cite{AlbouyInverseProblemCollinear2000}—reverses the standard
central configuration question. Specifically, it asks: if we are given a fixed
configuration $\vq$ (or their mutual distances
$q_i-q_j$), can one determine a set of strictly positive masses $m_j$ and a
multiplier $\lambda < 0$ that satisfy Equation \eqref{eq:CC2}?
It is clear that if the matrix $Q$ is invertible, one can 
derive $\vm = - \lambda Q^{-1}(\vq - q_0 \boldsymbol{1})$
(or, $\hat m = Q^{-1}(\vq - \hat c \boldsymbol{1})$), and it is left to check the 
positivity of masses. 

In his 1909 paper \cite{Buchanancertaindeterminantsconnected1909}, Buchanan
established a result that can be restated in modern terms as follows: for the
Newtonian potential ($\alpha=1$) and any even number of masses $n$, the
determinant of $Q$ is strictly non-vanishing, meaning $\det Q \neq 0$ for all
collinear configurations $\vq \in \conf{n}{\RR}$.

However, as Albouy and Moeckel revealed in
\cite{AlbouyInverseProblemCollinear2000}, Buchanan's original proof contains an
unrepairable error. Believing the underlying premise to still be
valid, they formalized the \emph{Albouy--Moeckel Conjecture}, which asserts
that the determinant is strictly non-zero for all configurations. Since then,
partial progress toward a complete resolution has been made. Albouy and Moeckel
\cite{AlbouyInverseProblemCollinear2000} provided proofs for
$n \leq 4$ with any $\alpha > 0$, as well as a computer-assisted proof for $n = 6$ in the Newtonian
case ($\alpha = 1$). 
Quite interestingly, and remarkably, 
they found that all polynomials involved in the rational expression of the Pfaffian 
have non-negative sign. Quoting the article, ``\emph{the numerator is a
polynomial in five positive variables with 15158 coefficients, all positive!}'' 
Subsequently, Xie \cite{xieAnalyticalProofCertain2014a}
established an analytical proof for the $n = 6$ case with $\alpha =
1$.
In
\cite{ferrarioPfaffiansInverseProblem2020c}, the author gave a proof for any $\alpha>0$ and $n=6$, 
and a computer-assisted proof for any even $n\leq 10$ in the Newtonian case
$\alpha=1$. 
Again, the computer-assisted proof was a more systematic and
efficient way of checking 
that all terms of polynomials involved in the rational expression of the Pfaffian 
do not have any negative term. 
We will see in section \ref{section:5} below that if the function $V$ 
satisfies the following two 
properties:
$V$ is decreasing
and log-convex, then the matrix 
$Q$ with entries $Q_{ij} = V(q_i - q_j)$ has non-zero Pfaffian $\pfaff Q$, for even $n\leq 14$.  


Recall the elementary fact
that if $n$ is odd and $A$ is an anti-symmetric
$n\times n$ matrix $A^{\tT} = - A \implies \det(A) = \det(-A) = (-1)^n \det(A) \implies \det(A) = 0$.
Consider the equation in matrix form $Q\hat \vm + \hat c \boldsymbol{1} = \vq$,
which can be written also as 
$\begin{bmatrix} 
Q & \boldsymbol{1} 
\end{bmatrix}
\begin{bmatrix}\hat\vm \\ \hat c \end{bmatrix} = \vq$: 
if $n$ is odd, a solution in $(\hat \vm, \hat c)$ exists
  if the $(n+1)\times (n+1)$ skew-symmetric 
matrix $Q^\mathrm{b}$ defined as
\[
Q^\mathrm{b} = \begin{bmatrix} 
Q & \boldsymbol{1} \\
-\boldsymbol{1}^{\tT} & 0 \\
\end{bmatrix}
\]
is non-singular. 
For the case of odd $n$, formulated in terms of the Pfaffian of the associated bordered matrix, 
see \cite{AlbouyInverseProblemCollinear2000},
\cite{xieAnalyticalProofCertain2014a}
and
\cite{xieRemarksInverseProblem2022b}.

If $n$ is even, 
the skew-symmetric determinant of even order is equal to the square of its Pfaffian:
if $A$ is a skew-symmetric $n\times n$ matrix, with $n$ even,
$\det A  = \left( 
\pfaff A
\right)^2$. 
Using Moulton's 1910 notation, the Pfaffian is defined as
\setlength{\arrayrulewidth}{1pt}
\[
\pfaff Q = 
\begin{array}{cccc|}
\multicolumn{1}{|c}{Q_{12}} & Q_{13} & \cdots &  Q_{1n}\\
 & Q_{23} & \cdots & Q_{2n} \\
 & & \ddots & \vdots \\
 && & Q_{n-1,n}
\end{array}
=
\sum_{\sigma} (-1)^\sigma  Q_{r_1,s_1} Q_{r_2,s_2} \ldots Q_{r_k,s_k}
\]
where $n=2k$, and the permutation
$\sigma$ runs over all \emph{perfect matchings} of $\vn=\{1,\ldots, n=2k\}$:
a perfect matching $\sigma$ is a fix-point free involution of $\vn$,
which can be represented also as
a partition of $\vn$ in pairs $[r_1,s_1, r_2,s_2,\ldots, r_k,s_k]$.
The sign $(-1)^\sigma$ is the parity of this permutation.
In D. Knuth and Cayley notation
\cite{knuthOverlappingPfaffians1996,cayley1849determinants}
\( \pfaff A = A[1,2,\ldots, n]  \).

The following recursive identity is the analogue of the Laplace expansion for the determinant:
\begin{equation}\label{eq:pfaff1}
A[1,2,\ldots,n]=\sum_{j=1}^{n-1} (-1)^{j+1} A_{jn} 
 A[1,\ldots, \hat j, \ldots, \hat n],
\end{equation}
where $A[1,\ldots, \hat j, \ldots, \hat n]$ denotes the Pfaffian of the matrix
 with the $j$-th and $n$-th rows and columns canceled out.

For $n=4$,
\[\begin{aligned}
\pfaff A & =  
A[1,2,3,4]   = A_{14} A[\hat 1,2,3,\hat 4] - A_{24} A[1,\hat 2,3,\hat 4] + A_{34} A[1,2,\hat 3,\hat 4]
\\
& = A_{14} A_{23} - A_{24} A_{13} + A_{34} A_{12}.
\end{aligned}
\]
In Moulton notation, again denoting with a hat all canceled symbols,
\[
\begin{aligned}
 \pfaff A   & = 
\setlength{\arrayrulewidth}{1pt}
\begin{array}{ccc|}
\multicolumn{1}{|c}{A_{12}} & A_{13} & A_{14}\\
 & A_{23} & A_{24} \\
 && A_{34}
\end{array}  
\\
 & =  A_{14}
\begin{array}{ccc|}
\multicolumn{1}{|c}{\hat A_{12}} & \hat A_{13} & \hat A_{14}\\
 & A_{23} & \hat A_{24} \\
 && \hat A_{34}
\end{array}
-
A_{24}  
\begin{array}{ccc|}
\multicolumn{1}{|c}{\hat A_{12}} & A_{13} & \hat A_{14}\\
 & \hat A_{23} & \hat A_{24} \\
 && \hat A_{34}
\end{array}
+ A_{34}   
\begin{array}{ccc|}
\multicolumn{1}{|c}{A_{12}} & \hat A_{13} & \hat A_{14}\\
 & \hat A_{23} & \hat A_{24} \\
 && \hat A_{34}
\end{array}  
.
\end{aligned}
\]

Some elementary properties of Pfaffians are the following: if $A$ is a skew-symmetric matrix,
and $B$ the matrix obtained by swapping the $i$-th and $j$-th columns and the $i$-th
and $j$-th rows, then
$\pfaff A = - \pfaff B$. 
Moreover, if $B$ is obtained by adding a multiple of the $j$-th column to the $k$-th column and 
by adding the same multiple of the $j$-th row to the $k$-th row of a skew-symmetric matrix $A$, then $\pfaff A = \pfaff B$. 
And, if $B$ is obtained by multiplying by a scalar $c$ both the $j$-th row and the $j$-th column of $A$, 
then $\pfaff B = c \pfaff A$.

\section{Pfaffian of Antisymmetric Matrices Subject to Quadratic and Monotonic Inequalities}
\label{section:2} 

In this section a particular type of matrices is introduced, 
which will be the object of our analysis. 
We are going to show that the 
Pfaffians of such matrices are positive, 
at least for $n\leq 14$. At the end of the article we will relate such 
matrices to the matrices $Q$, defined above in \eqref{eq:defQ}, 
of the Albouy-Moeckel conjecture.   

\begin{defi}
\label{defi:T}
Let $\quadmat_n$ be the space of all $n\times n$ antisymmetric 
matrices $A$ 
with real coefficients $a_{ij}$ 
satisfying the following properties: 
\begin{axioms}{A}
\item 
\label{positive}
\text{(positive)} 
$1\leq i<j \leq n \implies a_{ij} > 0$;
\item 
\label{quadratic}
\text{(quadratic inequality)} 
$a_{i,j}a_{i-1,j+1} \geq a_{i-1,j} a_{i,j+1} $ 
for all $i = 2,\ldots, n-2$, $ j = i+1 \ldots n-1$.  
 That is,
 $\displaystyle \det \begin{bmatrix} a_{i-1,j} & a_{i-1,j+1} \\
 a_{i,j} & a_{i,j+1} 
 \end{bmatrix} \leq 0 $ for all the $2\times 2$ submatrices in the upper (positive) triangular half of $A$. 
 \item
\label{monotone} (monotone)
For $j=2,\ldots n-2$ 
\begin{align*}
a_{1,2} \geq \ldots \geq a_{1,j} \geq a_{1,j+1} \geq \ldots \geq a_{1,n-1} \\
a_{2,n} \leq \ldots \leq a_{j,n} \leq a_{j+1,n} \leq \ldots \leq a_{n-1,n} 
\end{align*}
\end{axioms}
\end{defi}

The Pfaffian of a $4\times 4$ matrix in $\matrices_4$ is positive, as proven in the 
following proposition.  

\begin{propo}
\label{propo:4x4}
Let $Q$ be a $4\times 4$  matrix defined by
\[
Q = 
\begin{bmatrix}
0 & Q_{12} & Q_{13} & Q_{14}\\- Q_{12} & 0 & Q_{23} &
Q_{24}\\- Q_{13} & - Q_{23} & 0 & Q_{34}\\- Q_{14} & - Q_{24} & - Q_{34} ~.
& 0\end{bmatrix}
\]
Its Pfaffian $\pfaff{Q}$  is equal to 
$ Q_{12}Q_{34} - Q_{13}Q_{24} + Q_{14}Q_{23} $,
so that if $Q$ 
satisfies \aref{positive} and \aref{quadratic}, 
then $\pfaff{Q} \geq Q_{12}Q_{34} >0$.
On the other hand, if $Q$ satisfies \aref{positive} and \aref{monotone}, 
then $\pfaff{Q} \geq  Q_{14}Q_{23} > 0$. 
\end{propo}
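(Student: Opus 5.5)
The plan is to compute the Pfaffian directly and then apply each pair of hypotheses to compare the two positive terms with the single negative one. First, I apply the expansion \eqref{eq:pfaff1} with $n=4$, exactly as in the worked example of section \ref{section:conj}:
\[
\pfaff Q = Q_{14}Q_{23} - Q_{24}Q_{13} + Q_{34}Q_{12}.
\]
This is the claimed formula. The rest of the proof reduces to showing that the negative term $-Q_{13}Q_{24}$ is dominated by one of the two positive terms.

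Under \aref{positive} and \aref{quadratic}, I look for the instance of \aref{quadratic} that involves the indices $1,3$ and $2,4$. For $n=4$ the only admissible choice is $i=2$, $j=3$, since $i$ ranges over $2,\ldots,n-2=2$ and $j$ over $i+1,\ldots,n-1=3$. It gives
\[
a_{2,3}a_{1,4} \geq a_{1,3}a_{2,4},
\]
that is, $Q_{14}Q_{23} - Q_{13}Q_{24}\geq 0$. Hence $\pfaff Q \geq Q_{12}Q_{34}$, and this product is strictly positive by \aref{positive}.

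Under \aref{positive} and \aref{monotone}, the case $n=4$ of \aref{monotone} reads $Q_{12}\geq Q_{13}$ and $Q_{24}\leq Q_{34}$. Since all four entries are positive, I multiply these inequalities to get $Q_{12}Q_{34}\geq Q_{13}Q_{24}$. Therefore $\pfaff Q \geq Q_{14}Q_{23}>0$.

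There is no real obstacle here. The only point needing care is the index bookkeeping: checking that for $n=4$ the ranges in \aref{quadratic} and \aref{monotone} produce exactly the inequalities above. In particular, in \aref{monotone} the $j$-range $2,\ldots,n-2$ must be read so that the chains $a_{12}\geq a_{13}$ and $a_{24}\leq a_{34}$ are what is being asserted.
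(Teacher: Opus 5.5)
Your proposal is correct and follows essentially the same route as the paper: expand the Pfaffian, use the single instance $i=2$, $j=3$ of \aref{quadratic} to get $Q_{13}Q_{24}\leq Q_{14}Q_{23}$, and combine $Q_{12}\geq Q_{13}$, $Q_{24}\leq Q_{34}$ with positivity to get $Q_{12}Q_{34}\geq Q_{13}Q_{24}$. The paper writes this last step as the chain $Q_{12}Q_{34}\geq Q_{13}Q_{34}\geq Q_{13}Q_{24}$, which is exactly your multiplication of the two inequalities.
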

\begin{proof}
If $Q$ satisfies \aref{positive} of \ref{defi:T}, 
then
$Q_{12}>0$, $Q_{13}>0$, $Q_{14}>0$, $Q_{23}>0$, $Q_{24}>0$, $Q_{34}>0$. 
By \aref{quadratic} of \ref{defi:T}, 
the only inequality that holds, for  $i=2$ and $j=3$, 
is 
\begin{align*}
\det 
\begin{bmatrix} 
Q_{13} & Q_{14} \\
Q_{23} & Q_{24} 
\end{bmatrix}  
& = Q_{13}Q_{24} - Q_{23}Q_{14} \leq 0 \\
\implies \pfaff{Q} & = 
Q_{12}Q_{34} - Q_{13}Q_{24} + Q_{14}Q_{23}  \geq Q_{12}Q_{34} > 0 . 
\end{align*}
Furthermore, 
if $Q$ satisfies \aref{positive} and \aref{monotone}, 
  then 
\begin{align*}
Q_{12} \geq Q_{13}, \quad 
Q_{24} \leq Q_{34}  
\implies 
Q_{12}Q_{34} \geq Q_{13}Q_{34} \geq  Q_{13}Q_{24},
\end{align*}
and hence 
\[
\pfaff Q = 
Q_{12}Q_{34} - Q_{13}Q_{24} + Q_{14}Q_{23}  \geq Q_{14}Q_{23} > 0 . 
\]
\end{proof}

\begin{remark}
\label{remark:counterexamplen6}
Thus, for $n=4$, any of the two properties \aref{quadratic} and \aref{monotone} (together with \aref{positive})
is sufficient to assert the positivity of the Pfaffian.  
Consider the following matrix:
\[
\begin{bmatrix}
0 & 0.0001 & 0.01 & 0.1 & 1 & 1\\
-0.0001 & 0 & 0.013 & 0.12 & 1.1 & 1\\
-0.01 & -0.013 & 0 & 0.22 & 2 & 1\\
-0.1 & -0.12 & -0.22 & 0 & 3 & 1\\
-1 & -1.1 & -2 & -3 & 0 & 1\\
-1 & -1 & -1 & -1 & -1 & 0
\end{bmatrix}
=
\begin{bmatrix}
0 & \frac{1}{10000} & \frac{1}{100} & \frac{1}{10} & 1 & 1\\
- \frac{1}{10000} & 0 & \frac{13}{1000} & \frac{3}{25} & \frac{11}{10} & 1\\- \frac{1}{100} & - \frac{13}{1000} & 0 & \frac{11}{50} & 2 & 1\\
- \frac{1}{10} & - \frac{3}{25} & - \frac{11}{50} & 0 & 3 & 1\\
-1 & - \frac{11}{10} & -2 & -3 & 0 & 1\\
-1 & -1 & -1 & -1 & -1 & 0
\end{bmatrix}
\]
Its Pfaffian is negative; properties \aref{positive} and \aref{quadratic} both hold true, and \aref{monotone} is false. 

Furthermore, consider the following matrix
\[
\begin{bmatrix}
0 & 4 & 3 & 2 & 1 & 1 \\
-4 & 0 & \frac{1}{2} & 1 & 1 & 1 \\
-3 & - \frac{1}{2} & 0 & \frac{1}{4} & 1 & 1 \\
-2 & -1 & -\frac{1}{4} & 0 & 1 & 1 \\
-1 & -1 & -1 & -1 & 0 & 1 \\
-1 & -1 & -1 & -1 & -1 & 0 \\
\end{bmatrix}
\]
Its Pfaffian is equal to $-1<0$; properties \aref{positive} and \aref{monotone} hold, while \aref{quadratic} does not. 
Finally, 

A third example of a $4\times 4$ matrix with \aref{monotone} and \aref{quadratic} holding, and without \aref{positive}, 
with negative Pfaffian is the following:
\[
\begin{bmatrix}
0 & \frac{1}{2} & -1 & -1 \\
-\frac{1}{2} & 0 & -1 & -1 \\
 1 & 1   & 0 & -1 \\
1 & 1 & 1 & 0 
\end{bmatrix}
\]

These examples illustrate two basic facts, that will be further clarified below: first, all properties
\aref{positive}, \aref{quadratic} and \aref{monotone} are necessary in the definition of the 
space of matrices $\matrices$, in order to have positive Pfaffians;
and second, an induction proof based on condensation identities (such as a  Desnanot-Jacobi formula 
for Pfaffians) and property of submatrices might be technically quite challenging,
 since all three properties need to be preserved in the induction process  (cf. remark \ref{remark:otherUL}).
\end{remark}

Now, before considering bigger matrices, 
a simpler  
 type of matrices in $\matrices_n$ is introduced, which keeps 
 the positivity of 
the Pfaffian. 

\begin{lemma}
\label{lemma:multiplyrowcol}
If $A$ is a matrix in $\matrices_n$,  and $k$ an index $1\leq k \leq n$,
then  for each 
positive constant $c>0$ and $\hat A$,
the matrix obtained by multiplying 
the $k$-th row and the $k$-th column of $A$ by $c$ is anti-symmetric, it 
satisfies 
properties \aref{positive} and \aref{quadratic},
and $\pfaff \hat A = c \pfaff A$.  
\end{lemma}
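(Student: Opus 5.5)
Write $\hat A = D A D$, where $D$ is the diagonal matrix with $D_{kk}=c$ and $D_{ll}=1$ for $l\neq k$. The plan is to check the three assertions of the lemma directly from the entries of $\hat A$.

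\emph{Entries of $\hat A$.} Multiplying the $k$-th row and the $k$-th column by $c$ gives
\[
\hat a_{ij} = d_i d_j a_{ij}, \qquad d_k = c, \quad d_l = 1 \ (l \neq k).
\]
In particular $\hat a_{kk} = c^2 a_{kk} = 0$.

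\emph{Anti-symmetry.} We have $\hat A^{\tT} = D A^{\tT} D = -DAD = -\hat A$, so $\hat A$ is anti-symmetric.

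\emph{Property \aref{positive}.} For $i<j$ we get $\hat a_{ij} = d_i d_j a_{ij}$. Here $d_i d_j \in \{1, c\}$, because $i \neq j$ means at most one of the two indices equals $k$. Since $c>0$ and $a_{ij}>0$, it follows that $\hat a_{ij} > 0$.

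\emph{Property \aref{quadratic}.} Fix $i$ and $j$ as in the definition and compare the two products:
\[
\hat a_{i,j}\,\hat a_{i-1,j+1} = d_i d_j d_{i-1} d_{j+1}\, a_{i,j} a_{i-1,j+1},
\qquad
\hat a_{i-1,j}\,\hat a_{i,j+1} = d_{i-1} d_j d_i d_{j+1}\, a_{i-1,j} a_{i,j+1}.
\]
Both products carry the same positive factor $d_{i-1} d_i d_j d_{j+1}$. This is the key observation: the two diagonals of the $2\times 2$ minor involve the same row indices $\{i-1,i\}$ and the same column indices $\{j,j+1\}$. Multiplying the original inequality by this positive factor preserves it. The same argument shows that every $2\times 2$ minor in the upper triangle is scaled by a positive constant, so its sign is unchanged.

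\emph{Pfaffian.} Use the expansion $\pfaff \hat A = \sum_\sigma (-1)^\sigma \hat a_{r_1 s_1}\cdots \hat a_{r_m s_m}$, with $n = 2m$. In every perfect matching the index $k$ appears in exactly one pair. Hence each term equals $c$ times the corresponding term of $\pfaff A$, and $\pfaff \hat A = c \pfaff A$. Equivalently, this is the identity $\pfaff(D A D^{\tT}) = \det(D) \pfaff A = c \pfaff A$, which is the elementary property recalled at the end of section \ref{section:conj}.

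\emph{On \aref{monotone}.} Property \aref{monotone} may fail after the scaling, since $\hat a_{1k}$ is scaled while its neighbours in the first row are not. This is why the lemma claims only \aref{positive} and \aref{quadratic}. No real obstacle arises in the argument; the only point needing care is the bookkeeping of the factor $d_{i-1} d_i d_j d_{j+1}$, including the overlap cases $j = i+1$, where the same index occurs in both the row and the column set.
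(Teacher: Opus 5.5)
Your proof is correct and takes the same route as the paper. Each upper-triangle $2\times 2$ minor of $\hat A$ is the corresponding minor of $A$ rescaled by a positive factor, so its sign is kept, and you also check $\pfaff \hat A = c\pfaff A$ through the perfect-matching expansion, which the paper takes from the elementary properties in Section~\ref{section:conj}. One harmless slip: there is no overlap case at $j=i+1$, because the row indices $\{i-1,i\}$ and column indices $\{j,j+1\}$ are always disjoint, so at most one of them equals $k$ (exactly the paper's observation); your common-factor argument does not need this anyway.
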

\begin{proof}
The matrix $\pfaff A$ is clearly anti-symmetric, and \aref{positive} holds. 
So see that \aref{quadratic} holds, it suffices to note that each square upper-right $2\times 2$ 
submatrix of $ \hat A$  is equal to the same sub-matrix of $A$ with either a column
or a row multiplied by $c$ (or none). 
\end{proof}

\begin{defi}
\label{defi:one-bordered}
A matrix  $B\in \matrices_n$ is termed \term{one-bordered}  if, 
for $j=1,\ldots, n-1$, $b_{j,n} = 1 $ and
  $b_{1,n-1} = 1$. 
The space of all one-bordered matrices in $\matrices_n$ is denoted by
$\bmatrices_n \subset \matrices_n$. 
\end{defi}

\begin{propo}
\label{propo:factors}
If $A$ is in $\matrices_n$, $n$ even, and
let $B$ be obtained by dividing the $j$-th row and column of $A$ by $a_{jn}$ for $j=1,\ldots, n-1$,
and then by dividing the resulting entries $a'_{ij} = \dfrac{a_{ij}}{a_{in}a_{jn}}$ 
for $i,j\leq n-1$ by $a'_{1,n-1}$.  Then $B \in \matrices_n$, 
its entries $b_{ij}$ satisfy $b_{1,n-1} = b_{1n} = b_{2n} = \ldots b_{n-1,n} = 1$ 
(thus $B$ is one-bordered), 
and 
\begin{equation}\label{eq:factors}
\pfaff A = 
\left( \dfrac{a_{1,n-1}}{a_{1n}a_{n-1,n}} \right)^{(n-2)/2} 
\left( \prod_{j=1,\ldots, n-1} a_{jn}  \right) 
\pfaff B .
\end{equation}
\end{propo}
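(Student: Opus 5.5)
The plan is to realize the passage from $A$ to $B$ as a congruence $B = DAD$ by a positive diagonal matrix $D=\operatorname{diag}(d_1,\ldots,d_n)$. For such a congruence, $b_{ij}=d_id_ja_{ij}$ and $\pfaff B = (\prod_k d_k)\pfaff A$; this is the iterated form of the elementary row/column scaling property recalled at the end of section \ref{section:conj} (and of Lemma \ref{lemma:multiplyrowcol}).

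\emph{First stage.} I multiply the $j$-th row and column by $1/a_{jn}$ for $j=1,\ldots,n-1$, leaving row/column $n$ unchanged. This gives entries $a'_{ij}=a_{ij}/(a_{in}a_{jn})$ for $i,j\le n-1$ and $a'_{jn}=1$. The Pfaffian is multiplied by $\prod_{j\le n-1}a_{jn}^{-1}$.

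\emph{Second stage.} Set $c=a'_{1,n-1}=a_{1,n-1}/(a_{1n}a_{n-1,n})>0$. To divide only the entries with $i,j\le n-1$ by $c$, I multiply rows/columns $1,\ldots,n-1$ by $c^{-1/2}$ and row/column $n$ by $c^{1/2}$. Then the entries $b_{jn}$ pick up the factor $c^{-1/2}c^{1/2}=1$ and stay equal to $1$, while $b_{ij}=a'_{ij}/c$ for $i,j\le n-1$; in particular $b_{1,n-1}=1$. The Pfaffian is multiplied by $c^{-(n-1)/2}c^{1/2}=c^{-(n-2)/2}$. Combining the two stages gives $\pfaff B = c^{-(n-2)/2}\prod_{j\le n-1}a_{jn}^{-1}\,\pfaff A$, which rearranges to \eqref{eq:factors}. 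Here $n$ even makes $(n-2)/2$ an integer, although positivity of $c$ already makes $c^{\pm1/2}$ harmless.

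It remains to check $B\in\matrices_n$.
\begin{itemize}
\item \aref{positive} is immediate, since all $d_k>0$.
\item For \aref{quadratic}, each $2\times2$ minor on rows $i-1,i$ and columns $j,j+1$ is multiplied by $d_{i-1}d_id_jd_{j+1}>0$, so its sign is preserved. This holds also when $j+1=n$, so the argument of Lemma \ref{lemma:multiplyrowcol} carries over.
\item The only genuinely new point, and the step I regard as the main obstacle since Lemma \ref{lemma:multiplyrowcol} does not cover it, is \aref{monotone}. The second chain is trivial for $B$, since $b_{jn}=1$ for all $j$. For the first chain, $b_{1j}=a_{1j}/(c\,a_{1n}a_{jn})$. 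By \aref{monotone} for $A$, the numerator $a_{1j}$ is non-increasing in $j$ and $a_{jn}$ is non-decreasing in $j$ (for $2\le j\le n-1$). Hence $b_{1j}$ is non-increasing for $j=2,\ldots,n-1$, and \aref{monotone} holds for $B$.
\end{itemize}
Together with $b_{1n}=\cdots=b_{n-1,n}=b_{1,n-1}=1$, this shows that $B$ is one-bordered.
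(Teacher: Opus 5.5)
Your proof is correct and follows the paper's two-stage route. First you scale rows and columns $j\le n-1$ by $1/a_{jn}$ and obtain \aref{monotone} for the first row from the monotonicity of $a_{1j}$ and $a_{jn}$. Then you rescale the upper-left $(n-1)\times(n-1)$ block by $c=a'_{1,n-1}$.

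Your one refinement is to write the second stage as the congruence by $\operatorname{diag}(c^{-1/2},\ldots,c^{-1/2},c^{1/2})$. This cleanly justifies the factor $c^{(n-2)/2}$, and it shows that \aref{quadratic} and \aref{monotone} hold for $B$ itself; the paper checks these only for $A'$ and leaves $B$ implicit.
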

\begin{proof}
Let $A'$ denote the intermediate matrix  
obtained by dividing the $j$-th row and column of $A$ by $a_{jn}$, for all $j=1,\ldots, n-1$.
By \ref{lemma:multiplyrowcol}, $A$' satisfies
\aref{positive} and \aref{quadratic}. 
Its entries $a'_{ij}$ with $i,j\leq n-1$ satisfy 
$a'_{ij} = \dfrac{a_{ij}}{a_{in}a_{jn}}$, hence 
for $j=2,\ldots, n-2$ 
 \[
a'_{1j} 
= \dfrac{a_{1j}}{a_{1n}a_{jn}} \geq  
\dfrac{a_{1,j+1}}{a_{1n}a_{jn}} \geq  
\dfrac{a_{1,j+1}}{a_{1n}a_{j+1,n}} = 
 a'_{1,j+1} ~.
  \]
The last column entries of $A'$ are given by $a'_{jn} = 1$ for $j=1,\ldots, n-1$,
hence $A'$ satisfies \aref{monotone}, and 
$\pfaff A =  \left( \prod_{j=1,\ldots, n-1} a_{jn} \right)\pfaff A'$. 
Now, $B$ is obtained by dividing all the entries of $A'$ in the $(n-1)\times(n-1)$ UL submatrix
by $a'_{1,n-1} = \dfrac{a_{1,n-1}}{a_{1n}a_{n-1,n}}$, and hence 
$\pfaff A' = (a'_{1,n-1})^{(n-2)/2} \pfaff B$, and hence equation \eqref{eq:factors}.
\end{proof}

\begin{propo}
\label{propo:4x4bis}
Let $Q \in \matrices_4$ be 
\[
Q = 
\begin{bmatrix}
0 & Q_{12} & Q_{13} & Q_{14}\\- Q_{12} & 0 & Q_{23} &
Q_{24}\\- Q_{13} & - Q_{23} & 0 & Q_{34}\\
- Q_{14} & - Q_{24} & - Q_{34} & 0\end{bmatrix}.
\]
Then $\pfaff Q \geq \max(Q_{12}Q_{34}, Q_{13}Q_{24}, Q_{14}Q_{23} )$.
\end{propo}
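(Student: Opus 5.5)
We need $\pfaff Q = Q_{12}Q_{34} - Q_{13}Q_{24} + Q_{14}Q_{23}$ to be at least each of the three products. The plan is to use both earlier lower bounds from Proposition \ref{propo:4x4}, since a matrix in $\matrices_4$ satisfies \aref{positive}, \aref{quadratic} and \aref{monotone} simultaneously. First, \aref{positive} and \aref{quadratic} give $\pfaff Q \geq Q_{12}Q_{34}$. Second, \aref{positive} and \aref{monotone} give $\pfaff Q \geq Q_{14}Q_{23}$. These two bounds are read off directly from the proof of Proposition \ref{propo:4x4}.

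It remains to bound the middle product $Q_{13}Q_{24}$. I would not look for a new inequality here. Instead, I would observe that this product is dominated by one of the terms already controlled. In the \aref{monotone} half of Proposition \ref{propo:4x4} we showed $Q_{12}Q_{34} \geq Q_{13}Q_{34} \geq Q_{13}Q_{24}$, using $Q_{12}\geq Q_{13}$, $Q_{24}\leq Q_{34}$ and positivity of the entries. Alternatively, \aref{quadratic} gives $Q_{13}Q_{24}\leq Q_{14}Q_{23}$. Either route yields
\[
\pfaff Q \geq Q_{12}Q_{34} \geq Q_{13}Q_{24}.
\]

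Taking the maximum of the three lower bounds then gives the claim. There is no real obstacle. The only point to check is that all three axioms hold at once for $Q\in\matrices_4$, so that both halves of Proposition \ref{propo:4x4} apply to the same matrix. The middle term is the one not directly covered by Proposition \ref{propo:4x4}, and it is handled by domination as above.
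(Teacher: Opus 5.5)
Your argument is correct and is essentially the paper's first proof: \aref{quadratic} gives $Q_{13}Q_{24}\leq Q_{14}Q_{23}$, and \aref{monotone} with \aref{positive} gives $Q_{13}Q_{24}\leq Q_{12}Q_{34}$; substituting each into $\pfaff Q = Q_{12}Q_{34} - Q_{13}Q_{24} + Q_{14}Q_{23}$ yields two lower bounds, and either one dominates $Q_{13}Q_{24}$. (A trivial slip: the \aref{quadratic} route for the middle term gives the chain $\pfaff Q\geq Q_{14}Q_{23}\geq Q_{13}Q_{24}$, not the chain through $Q_{12}Q_{34}$ that you display, but either chain suffices.)
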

\begin{proof}
As in the proof of \ref{propo:4x4},
by \aref{quadratic},  $Q_{13}Q_{24} \leq  Q_{23}Q_{14}$,
and by \aref{monotone},
$Q_{12}Q_{34} \geq Q_{13}Q_{24}$.
Hence $Q_{13}Q_{24} \leq \min( Q_{23}Q_{14}, Q_{12}Q_{34} )$. 
 and therefore 
$\pfaff Q = Q_{12}Q_{34} - Q_{13}Q_{24} + Q_{14}Q_{23} 
\geq \max(  Q_{23}Q_{14}, Q_{12}Q_{34} ) \geq Q_{13}Q_{24}$, and hence the thesis. 

Another proof is as follows: by \eqref{eq:factors}, 
$\pfaff Q = Q_{24}Q_{13} \pfaff B$, 
where 
\[
B = 
\begin{bmatrix}
0 & b_{12} & 1  & 1 \\- b_{12} & 0 & b_{23} & 1 \\
- 1  & - b_{23} & 0 & 1 \\
- 1  & - 1 & - 1
& 0\end{bmatrix}
\]
and $b_{12} \geq 1 \leq b_{23} $. 
Its Pfaffian is 
$\pfaff B =  b_{12}  - 1 + b_{23} \geq 1$. 
Hence $\pfaff Q \geq Q_{24}Q_{13}$. 
Summarizing, because of  \ref{propo:4x4}, 
\[
\pfaff Q \geq \max(Q_{12}Q_{34}, Q_{13}Q_{24}, Q_{14}Q_{23} ) 
\]
as claimed. 
\end{proof}


Now, the space of all one-bordered skew-symmetric matrices $\bmatrices_n$ 
can be para\-me\-trized by $n(n-3)/2$ variables $v_{i,j}$ as follows. 

\begin{defi}
\label{defi:vij}
Consider the variables 
$v_{x,y}$ with $x,y=1\,\ldots, n-1$, $x<y$; 
assume $v_{1,n-1} = 1$, and $v_{x,y} \geq 1$. 
Define, for simplicity, also 
$v_{j,n} = 1$ for $j=1\ldots n-1$.
The map 
$\mu \from [1,+\infty)^{n(n-3)/2} \to \bmatrices_n$ is defined as follows:
given $v_{ij}$, let $\mu(v_{ij})$ denote the $n\times n$ anti-symmetric 
matrix $Q$ with entries, for $i<j$, 
\[
Q_{ij}  =  \prod_{(x,y) \in  [ 1, \ldots, i] \times [j,\ldots, n ]  }  v_{x,y}.
\]
\end{defi}

Consider that for each even $n$, there are $(1 + 2 + \ldots + n-1) - n = \dfrac{n(n-3)}{2}$ 
non-constant variables  $v_{ij}$. 

For $n=4$, 
\begin{align*}
\begin{bmatrix}
 v_{12} & v_{13}=1  & v_{14}=1\\
  & v_{23} & v_{24}=1\\
 &  & v_{34}=1 
\end{bmatrix}
\mapsto 
\begin{bmatrix}
 0 & v_{12} & 1  &1\\
 -v_{12} & 0 & v_{23} & 1\\
 -1 &  -v_{23} & 0 & 1  \\
 -1 & -1 & -1 & 0 
\end{bmatrix}.
\end{align*}

For $n=6$, 
\begin{align*}
\begin{bmatrix}
  v_{12} & v_{13}  & v_{14} & 1 & 1 \\
         & v_{23}  & v_{24} & v_{25} & 1 \\
         &         & v_{34} & v_{35} & 1 \\
         &         &        & v_{45} & 1 \\
         &         &        &        & 1 
\end{bmatrix}
\mapsto  \mu(v_{ij})
\end{align*}
where
\begin{align}
\label{defi6vij}
\mu(v_{ij}) = 
\begin{bmatrix}
0 & v_{12}v_{13}v_{14} & v_{13}v_{14}  & v_{14} & 1 & 1 \\
 & 0  & v_{13} v_{14} v_{23} v_{24} v_{25}    &  v_{14} v_{24} v_{25}  &  v_{25}   & 1  \\
 &    &  0 &  v_{14} v_{24} v_{25} v_{34} v_{35}   & v_{25}v_{35}    &  1    \\
 &    &    & 0  &  v_{25} v_{35} v_{45}  & 1     \\
 &    &    &    & 0  &  1  \\
 &    &    &    &    & 0   \\
\end{bmatrix}.
\end{align}

\begin{propo}
\label{propo:vij}
Take any choice of $v_{ij}$ in \ref{defi:vij}  ($v_{ij} \geq 1$). Then $\mu(v_{ij}) \in \matrices_n$, 
and it is one-bordered.
Conversely,  
let $B\in \bmatrices_n$ be one-bordered. Then there exist a unique set of variables $v_{x,y}\geq 1$,
for $x=1,\ldots, n$, $y=1,\ldots, n$, $x<y$, 
such that $\mu(v_{ij}) = Q$.   
In other words, 
the map $\mu\from [1,+\infty)^{n(n-3)/2} \to \bmatrices_n$ is a bijection. 
\end{propo}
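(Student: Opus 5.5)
The plan rests on one observation: the entries of $\mu(v_{ij})$ are products of the $v_{x,y}$ over rectangles, so a $2\times 2$ ``cross-ratio'' of adjacent entries isolates a single variable. Write $R(i,j)=[1,\ldots,i]\times[j,\ldots,n]$. I will use the conventions $Q_{0,j}=1$ and $Q_{i,n+1}=1$, which correspond to empty rectangles. Then
\[
R(i,j)\sqcup R(i-1,j+1) \quad\text{and}\quad R(i-1,j)\sqcup R(i,j+1)
\]
agree as multisets except for the single cell $(i,j)$. This is the inclusion--exclusion step that drives the whole argument. It gives, for $1\le i<j\le n$,
\begin{equation*}
\frac{Q_{i,j}\,Q_{i-1,j+1}}{Q_{i-1,j}\,Q_{i,j+1}} = v_{i,j}.
\end{equation*}
Note that every cell $(x,y)$ in $R(i,j)$ with $i<j$ has $x\le i<j\le y$. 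So only variables $v_{x,y}$ with $x<y$ occur, and $\mu$ is well defined.

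\emph{Direct implication.} First, \aref{positive} holds because every $v_{x,y}\geq 1>0$. Next, for $2\le i$ and $j\le n-1$ the displayed identity gives $Q_{i,j}Q_{i-1,j+1}=v_{ij}\,Q_{i-1,j}Q_{i,j+1}\ge Q_{i-1,j}Q_{i,j+1}$, which is \aref{quadratic}. For the first row the identity reads $Q_{1j}/Q_{1,j+1}=v_{1j}\ge 1$, which gives the first chain in \aref{monotone}. For the last column, $Q_{jn}=\prod_{x\le j}v_{x,n}=1$, so the second chain holds trivially with equalities. Finally $Q_{1,n-1}=v_{1,n-1}v_{1,n}=1$, so $\mu(v_{ij})$ is one-bordered.

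\emph{Converse.} Given $B\in\bmatrices_n$, the identity forces the definition $v_{ij}:=b_{ij}b_{i-1,j+1}/(b_{i-1,j}b_{i,j+1})$, with the same boundary conventions. This already proves uniqueness. The four entries involved are strictly upper-triangular, or equal to the conventional $1$, so the quotient makes sense by \aref{positive}. To show $v_{ij}\ge 1$ I split into cases:
\begin{itemize}
\item for $i\ge 2$ and $j\le n-1$ this is \aref{quadratic};
\item for $i=1$ one gets $v_{1j}=b_{1j}/b_{1,j+1}\ge 1$ by \aref{monotone}, and in particular $v_{1,n-1}=b_{1,n-1}/b_{1n}=1$;
\item for $j=n$ one gets $v_{in}=b_{in}/b_{i-1,n}=1$ by one-borderedness.
\end{itemize}
So these $v$'s are admissible parameters.

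It remains to check that $\mu(v)=B$. I will argue by induction on $i+(n-j)$. The base cases are the first row and the last column, which are handled by telescoping products of $v_{1y}$ and $v_{xn}$. For the inductive step, the defining relation lets me solve
\[
b_{ij}=v_{ij}\,\frac{b_{i-1,j}\,b_{i,j+1}}{b_{i-1,j+1}},
\]
and $\mu(v)_{ij}$ obeys the same recursion with the same boundary data, so the two agree. This yields the bijection. The only delicate point I expect is the bookkeeping at the boundary rows and columns, and in particular checking that the conventions $Q_{0,\cdot}=Q_{\cdot,n+1}=1$ match the counting of the $n(n-3)/2$ free variables. The interior argument is purely formal.
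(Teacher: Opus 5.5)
Your proposal is correct and follows essentially the paper's route. Your rectangle identity $Q_{ij}Q_{i-1,j+1}=v_{ij}\,Q_{i-1,j}Q_{i,j+1}$ is exactly what the paper obtains through its factors $\alpha=\prod_{x\le i-1}v_{x,j}$ and $\beta=\prod_{y\ge j+1}v_{i,y}$, and the paper's converse defines the $v_{ij}$ by the same ratios (first row, column $n-1$, interior cross-ratios); your conventions $Q_{0,\cdot}=Q_{\cdot,n+1}=1$ and the induction on $i+(n-j)$ just make explicit the verification of $\mu(v)=B$ and of uniqueness, which the paper leaves as ``easy to see''.
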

\begin{proof}
Let $Q=\mu(v_{ij})$. 
By definition,  for $i<j$, 
\[
Q_{ij}  =  \prod_{(x,y) \in  [ 1, \ldots, i] \times [j,\ldots, n ]  }  v_{x,y}. 
\]
Property \aref{positive} is obvious. 
For 
 all the $2\times 2$ submatrices in the upper (positive) triangular half of $Q$  one has
\begin{align*}
\begin{bmatrix} 
Q_{i-1,j} & Q_{i-1,j+1} \\
Q_{i,j} & Q_{i,j+1} 
 \end{bmatrix} 
 =
\begin{bmatrix} 
\prod_{(x,y) \in  [ 1, \ldots, i-1] \times [j,\ldots, n ]  }  v_{x,y} 
& 
\prod_{(x,y) \in  [ 1, \ldots, i-1] \times [j+1,\ldots, n ]  }  v_{x,y} 
\\
\prod_{(x,y) \in  [ 1, \ldots, i] \times [j,\ldots, n ]  }  v_{x,y}
& 
\prod_{(x,y) \in  [ 1, \ldots, i] \times [j+1,\ldots, n ]  }  v_{x,y} 
 \end{bmatrix} 
\end{align*} 
Note that 
if $\alpha = \prod_{x\in [1,\ldots, i-1]} v_{x,j}$ 
  and 
$\beta = \prod_{y\in [j+1,\ldots, n]}  v_{i,y} $, then  
\begin{align*}
Q_{i-1,j} & = Q_{i-1,j+1} \prod_{x\in [1,\ldots, i-1]} v_{x,j}  = Q_{i-1,j+1} \alpha \\
Q_{i,j+1} & = Q_{i-1,j+1} \prod_{y\in [j+1,\ldots, n]}  v_{i,y} = Q_{i-1,j+1} \beta \\
Q_{i,j} & = 
Q_{i-1,j+1} \alpha \beta v_{ij}  
\\
\end{align*}
Hence
\begin{align*}
\det 
\begin{bmatrix} 
Q_{i-1,j} & Q_{i-1,j+1} \\
Q_{i,j} & Q_{i,j+1} 
 \end{bmatrix}  & = 
\det 
\begin{bmatrix} 
 Q_{i-1,j+1} \alpha & Q_{i-1,j+1} \\
 Q_{i-1,j+1} \alpha \beta v_{ij} &  Q_{i-1,j+1} \beta
 \end{bmatrix}  \\
& = Q_{i-1,j+1}^2 (\alpha \beta - \alpha \beta v_{ij}) \leq 0~.
\end{align*}
Thus \aref{quadratic} holds. 
It is easy to see that \aref{monotone} holds, and that it is one-bordered. 

Conversely, 
let $Q\in \matrices_n$ be one-bordered. Proceed as follows: 
First, let $v_{1,n-2} = Q_{1,n-2}$; proceeding for $j=2,\ldots, n-2$, define
$v_{1j} = Q_{ij}/Q_{i,j+1}$, which by \aref{monotone} 
is $v_{1j}\geq 1$ (and unique). 
Similarly, for $i=2\ldots n-2$, let
$v_{i,n-1} = Q_{i,n-1}/Q_{i-1,n-1} \geq 1$.

Now, take a $2\times 2$ submatrix in the upper-right terms, with 
$\alpha$ and $\beta$ as before, i.e. for $2\leq i<j \leq n-1 $.
\begin{align*}
\begin{bmatrix} 
Q_{i-1,j} & Q_{i-1,j+1} \\
Q_{i,j} & Q_{i,j+1} 
 \end{bmatrix}  & = 
\begin{bmatrix} 
 Q_{i-1,j+1} \alpha & Q_{i-1,j+1} \\
 Q_{i-1,j+1} \alpha \beta v_{ij} &  Q_{i-1,j+1} \beta
 \end{bmatrix}  \\
\implies
\alpha & = Q_{i-1,j} / Q_{i-1,j+1} \\
\beta  & = Q_{i,j+1} / Q_{i-1,j+1} \\
v_{ij} & = 
Q_{ij} / ( Q_{i-1,j+1} \alpha \beta ) = 
Q_{ij} / ( Q_{i-1,j+1} \dfrac{Q_{i-1,j}}{Q_{i-1,j+1}}  \dfrac{Q_{i,j+1}}{Q_{i-1,j+1}}  ) = 
\\
 & = Q_{ij} Q_{i-1,j+1} / (  Q_{i-1,j}  Q_{i,j+1})  \geq 1 .
\end{align*}
It is easy to see that 
for such values of $v_{ij}$, $\mu(v_{ij})=Q$, 
and that the choice is unique. 
\end{proof}

With the parametrization of the space of one-bordered matrices $\bmatrices_6$, 
one can prove that their Pfaffians are at least $1$, as follows. Basically,
the Pfaffian is a polynomial in the variables $v_{ij}\geq 1$, and  
by simple algebraic manipulations one can provide a proof.  

\begin{propo}[$n=6$]
\label{propo:n6}
If $B\in \bmatrices_6$ is one-bordered, then $\pfaff B \geq 1$. 
\end{propo}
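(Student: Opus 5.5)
The plan is to pass to the $v_{ij}$-parametrization of \ref{propo:vij}, write $\pfaff B$ as an explicit polynomial in the nine variables $v_{12},v_{13},v_{14},v_{23},v_{24},v_{25},v_{34},v_{35},v_{45}$, and substitute $v_{ij}=1+u_{ij}$ with $u_{ij}\geq 0$. The goal is to show that the resulting polynomial in the $u_{ij}$ has constant term $1$ and no negative coefficients. That immediately gives $\pfaff B\geq 1$. The constant term is the Pfaffian of $\mu(1,\ldots,1)$, the skew matrix with all upper entries equal to $1$. By the expansion \eqref{eq:pfaff1} and induction, its Pfaffian is $1$: the signs alternate, and the sum $\sum_{j}(-1)^{j+1}$ over $j=1,\ldots,n-1$ equals $1$.

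To keep the expansion manageable, I will first reduce to a $4\times 4$ Pfaffian. Since the last column of $B$ is all ones, for $k=1,\ldots,4$ I subtract row and column $5$ from row and column $k$. This does not change the Pfaffian, and it kills $b_{k6}$ for $k\leq 4$, leaving only $b_{56}=1$ in the last column. Expanding along the last column then gives $\pfaff B=\pfaff C$, where $C$ is the $4\times 4$ skew matrix with entries
\[
c_{ij}=b_{ij}-b_{i5}+b_{j5},\qquad 1\leq i<j\leq 4,
\]
so that $\pfaff B=c_{12}c_{34}-c_{13}c_{24}+c_{14}c_{23}$.

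Next I insert the entries of $\mu(v_{ij})$ from \eqref{defi6vij}. The relevant ones are $b_{15}=1$, $b_{25}=v_{25}$, $b_{35}=v_{25}v_{35}$, $b_{45}=v_{25}v_{35}v_{45}$, $b_{14}=v_{14}$, $b_{13}=v_{13}v_{14}$, and so on. Each $c_{ij}$ is then a difference of monomials. Many of these differences factor with visible nonnegative factors, for example $b_{j5}-b_{i5}=b_{i5}\,(\text{product of }v\text{'s})-b_{i5}$, which is a multiple of $(v-1)$-type expressions. I would rewrite each $c_{ij}$ in terms of the $u$'s before multiplying, so that the cancellations are visible at the level of factors rather than after expanding everything.

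The main obstacle is the negative term $c_{13}c_{24}$. I would try to dominate it by pairing it with $c_{12}c_{34}$ and with $c_{14}c_{23}$, mimicking the two inequalities in the proof of \ref{propo:4x4bis}. The quadratic inequality and the monotonicity, expressed through $v_{ij}\geq 1$, should show that every monomial of $c_{13}c_{24}$ is matched by a monomial in one of the positive products. If this pairing becomes unwieldy, the fallback is a direct expansion in the nine $u$-variables, which is small enough to check term by term, confirming that all coefficients are nonnegative and the constant term is $1$.
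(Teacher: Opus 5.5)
Your reduction is correct and well chosen. Clearing the last column with the $(5,6)$ entry gives $\pfaff B=\pfaff C$ with $c_{ij}=b_{ij}-b_{i5}+b_{j5}$, which is the matrix of Remark~\ref{remark:otherUL}. The constant-term computation and the nonnegativity of $b_{j5}-b_{i5}$ are also fine.

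The gap is that the proof stops exactly where the content begins. That $c_{13}c_{24}$ is dominated is only announced (``should show''), and the fallback expansion is not carried out, so nothing as written proves $\pfaff B\geq 1$. Moreover, the mechanism you propose, transplanting both inequalities of Proposition~\ref{propo:4x4bis} to $C$, fails in half. $C$ need not lie in $\matrices_4$, and the comparison $c_{12}c_{34}\geq c_{13}c_{24}$ (the one coming from \aref{monotone}) is false in general. Take all $v_{ij}=1$ except $v_{35}=2$: then $c_{12}=1$ and $c_{34}=c_{13}=c_{24}=c_{14}=c_{23}=2$, so $c_{12}c_{34}=2<4=c_{13}c_{24}$, while $\pfaff B=2$.

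What survives is the \aref{quadratic}-type comparison $c_{13}c_{24}\leq c_{14}c_{23}$. It is all you need, but it must be verified by computation; it uses the full structure of $\mu(v_{ij})$, not just \aref{quadratic} for $B$. Set $x=v_{14}$, $y=v_{13}$, $z=v_{23}$, $w=v_{24}$, $s=v_{25}$, $t=v_{35}$, $r=v_{45}$. Then $c_{13}=xy-1+st$, $c_{24}=s(xw-1+tr)$, $c_{14}=x-1+str$, $c_{23}=s(xyzw-1+t)$, and
\[
\frac{c_{14}c_{23}-c_{13}c_{24}}{s}=(z-1)\,xyw\,(x-1+str)+(t-1)\,g+h ,
\]
where $h=(s-1)\,xw\,(y-1)+(r-1)\,xy\,(w-1)+(s-1)(r-1)(xyw-1)$ and $g=x(y-1)(w-1)+h$.

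Every term is $\geq 0$ for $v_{ij}\geq 1$ (indeed $\succeq 0$ in the $u_{ij}$). Using $c_{12}=b_{12}+(v_{25}-1)$ and $c_{34}=b_{34}+v_{25}v_{35}(v_{45}-1)$, this gives
$\pfaff B=c_{12}c_{34}+(c_{14}c_{23}-c_{13}c_{24})\geq c_{12}c_{34}\geq b_{12}b_{34}\geq 1$. It even gives $p_6(u)\succeq 1$.

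Completed this way, your route is genuinely different from the paper's. The paper expands $\pfaff B$ along the last column into five $4\times 4$ Pfaffians. It bounds the first minus the second below by $(v_{13}v_{45}-1)(v_{24}v_{25}-1)\geq 0$, the fifth minus the fourth below by $(v_{12}v_{35}-1)(v_{14}v_{24}-1)\geq 0$, and shows the middle one is $\geq 1$. Your route needs a single explicit identity and yields the bound $\pfaff B\geq c_{12}c_{34}$.
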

\begin{proof}
Consider the substitution of variables $v_{ij}\geq 1$ defined above, as in \eqref{defi6vij}.
Expanding on the last column (and Knuth's notation: the hat means deleted column-row)
\begin{align*}
\pfaff B &= B[\hat 1,2,3,4,5,\hat 6] 
-  B[1,\hat 2,3, 4, 5,\hat 6 ] \\
& +  B[1,2,\hat 3,4,5,\hat 6 ] \\
& -  B[1,2,3,\hat 4, 5, \hat 6]
+  B[1,2,3,4,\hat 5,\hat 6]
\\
& = 
(B_{23}B_{45} - B_{24}B_{35} + B_{25}B_{34})
- ( B_{13} B_{45} - B_{14}B_{35} + B_{34} ) \\
& + ( B_{12} B_{45} - B_{14} B_{25} + B_{24} )  \\
& -  ( B_{12} B_{35} - B_{13}B_{25} + B_{23}  )
 +  (B_{12}B_{34} - B_{13}B_{24} + B_{14}B_{23} )~.
\\
\end{align*}
Now, $ B[\hat 1,2,3,4,5,\hat 6] 
-  B[1,\hat 2,3, 4, 5,\hat 6 ] $ can be written as  
\begin{align*}
& (B_{23}B_{45} - B_{24}B_{35} + B_{25}B_{34})
- ( B_{13} B_{45} - B_{14}B_{35} + B_{34} ) \\
& = 
(B_{23}B_{45} - B_{24}B_{35})
- ( B_{13} B_{45} - B_{14}B_{35} )
+ ( B_{25}
- 1 ) B_{34}   \\
& \geq 
B_{23}B_{45} - B_{24}B_{35}
-  B_{13} B_{45} + B_{14}B_{35} 
\end{align*}
By using $v_{ij}\geq 1 $ variables, 
$ B_{23}B_{45} - B_{24}B_{35}
-  B_{13} B_{45} + B_{14}B_{35}  $ can be written as 
\begin{align*}
& v_{13} v_{14} v_{23} v_{24} v_{25}  v_{25} v_{35} v_{45} - v_{14} v_{24} v_{25} v_{25}v_{35}   
 -  v_{13}v_{14}  v_{25} v_{35} v_{45} + v_{14}  v_{25}v_{35}\\
 & = 
v_{14} v_{25} v_{35} (
v_{13}  v_{23} v_{24}   v_{25}  v_{45} -  v_{24}  v_{25} 
-  v_{13}   v_{45} + 1 ) \\
& \geq  
v_{13}  v_{24}   v_{25}  v_{45} -  v_{24}  v_{25} 
-  v_{13}   v_{45} + 1  \\
& = (v_{13} v_{45} - 1) ( v_{24} v_{25} - 1 ) \geq 0 
\end{align*} 
and hence 
 $ B[\hat 1,2,3,4,5,\hat 6] 
-  B[1,\hat 2,3, 4, 5,\hat 6 ] \geq 0 $.

Furthermore, 
$B[1,2,3,4,\hat 5,\hat 6]
 -  B[1,2,3,\hat 4, 5, \hat 6]$
 can be written as
\begin{align*}
&    (B_{12}B_{34} - B_{13}B_{24} + B_{14}B_{23} )
 -  ( B_{12} B_{35} - B_{13}B_{25} + B_{23}  ) \\ 
& = 
   (B_{12}B_{34} - B_{13}B_{24} )
 -  ( B_{12} B_{35} - B_{13}B_{25} ) 
   + ( B_{14}-1) B_{23} 
   \\
 &   \geq 
   B_{12}B_{34} - B_{13}B_{24} 
 -   B_{12} B_{35} + B_{13}B_{25} 
\end{align*}
By using $v_{ij}\geq 1 $ variables, 
$ B_{12}B_{34} - B_{13}B_{24} 
 -   B_{12} B_{35} + B_{13}B_{25}  $ 
can be written as
\begin{align*}
& v_{12}v_{13}v_{14} v_{14} v_{24} v_{25} v_{34} v_{35} - v_{13}v_{14} v_{14} v_{24} v_{25} 
- v_{12}v_{13}v_{14} v_{25}v_{35} +  v_{13} v_{14} v_{23} v_{24} v_{25}  v_{25}  
\\
& = v_{13}v_{14} v_{25} ( 
v_{12} v_{14} v_{24}  v_{34} v_{35} -  v_{14} v_{24}  
- v_{12} v_{35} +   v_{23} v_{24} v_{25}    
) \\
& \geq 
v_{12} v_{35} v_{14} v_{24} - v_{14} v_{24} 
- v_{12} v_{35} + 1 \\
& = (v_{12}v_{35} - 1 ) (v_{14} v_{24} - 1) \geq 0 ,
\end{align*}
and hence 
$ B_{12}B_{34} - B_{13}B_{24} 
 -   B_{12} B_{35} + B_{13}B_{25}  \geq 0$.

Finally,
$ B_{12} B_{45} - B_{14} B_{25} + B_{24}  $
can be written as
\begin{align*}
& 
v_{12}v_{13}v_{14}  v_{25} v_{35} v_{45} - v_{14} v_{25} +  v_{14} v_{24} v_{25} \\
& = v_{14} v_{25} ( 
v_{12}v_{13}v_{35} v_{45} - 1 +  v_{14} v_{24} ) \geq 1,  
\end{align*} 
which means 
$ B_{12} B_{45} - B_{14} B_{25} + B_{24} \geq 1 $.
We can conclude therefore that $\pfaff B \geq 1$, as claimed.  
%
\end{proof}

Instead of algebraic manipulations and inequalities $v_{ij}\geq 1$,
another idea is to define the non-negative variables $u_{ij} = v_{ij}-1$, 
parametrize $\bmatrices_n$ in terms of $u_{ij}$, and write the 
generic Pfaffian as a polynomial in the variables $u_{ij}$.  
What (remarkably) happens, is that such Pfaffians appear to be polynomials
in $u_{ij}$ with 
only non-negative coefficients. 

\begin{defi}
\label{defi:uij}
For each $v_{ij}$ (with $1\leq i < j \leq n $)  
in \ref{defi:vij}, let $u_{ij} = v_{ij} - 1$; clearly 
it follows that $v_{ij} \geq 1 \iff u_{ij} \geq 0$. 
\end{defi}

\begin{propo}
\label{propo:uij}
Consider the map $p_n \from [0,+\infty)^{n(n-3)/2} \to \RR$ 
defined as the composition in the following diagram:
\[
 \xymatrix@C+12pt@R+12pt{%
 { [0,+\infty)^{n(n-3)/2} }  
\ar@/_2em/[rrr]_-{p_n}
 \ar[r]^-{u\mapsto v}_-{\approx} 
 &  { [1,\infty)^{n(n-3)/2} } \ar[r]^-{\mu}_-{\approx} &  
 {\bmatrices_b} \ar[r]^-{\pfaff} &  { \RR  } 
 }
\]
In other words, if $Q\in \bmatrices_n$, and $v_{i,j}$ are the variables 
(defined in \ref{defi:vij}) of proposition \ref{propo:vij}, then by substituting 
$v_{ij} \leftarrow 1+u_{ij}$ the coefficients $Q_{ij}$ become polynomials,
with integer coefficients, in the variables
$u_{ij}$. 
The composition $p_n(u)  = \pfaff Q $ is  a polynomial (with integer coefficients)
in the $\frac{n(n-3)}{2}$ variables $u_{ij}$.
\end{propo}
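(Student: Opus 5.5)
The statement is essentially a well-definedness claim, and I would prove it by following the composition in the diagram from left to right, checking integrality at each stage.

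\emph{First arrow.} The map $u\mapsto v$, $v_{ij}=1+u_{ij}$, is an affine bijection $[0,+\infty)^{n(n-3)/2}\to[1,+\infty)^{n(n-3)/2}$ with inverse $u_{ij}=v_{ij}-1$, as noted in \ref{defi:uij}. The frozen variables ($v_{1,n-1}=1$ and $v_{j,n}=1$) correspond to $u=0$ and are not coordinates.

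\emph{Second arrow.} By \ref{propo:vij}, $\mu$ is a bijection onto $\bmatrices_n$. For $i<j$, \ref{defi:vij} gives $Q_{ij}$ as a finite product of the $v_{x,y}$ over $(x,y)\in[1,\ldots,i]\times[j,\ldots,n]$ with $x<y$. Substituting $v_{x,y}=1+u_{x,y}$ makes each $Q_{ij}$ a finite product of polynomials of the form $1+u_{x,y}$, or of the constant $1$ for frozen indices. Hence each $Q_{ij}$ lies in $\mathbb{Z}[u]$; in fact its coefficients are non-negative, since each factor has non-negative coefficients. The lower entries $Q_{ji}=-Q_{ij}$ are then also integer polynomials.

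\emph{Third arrow.} For $n$ even, use the definition
\[
\pfaff Q=\sum_\sigma(-1)^\sigma Q_{r_1s_1}\cdots Q_{r_ks_k}.
\]
This is a finite signed sum, with signs $\pm1$, of products of entries. Since $\mathbb{Z}[u]$ is closed under sums, products and negation, $p_n(u)=\pfaff\mu(1+u)$ is a polynomial with integer coefficients in the $n(n-3)/2$ variables $u_{ij}$. Alternatively, one can induct on $n$ using the expansion \eqref{eq:pfaff1}. It also follows that $p_n$ is defined on all of $\RR^{n(n-3)/2}$, and on $[0,+\infty)^{n(n-3)/2}$ it agrees with the composition in the diagram.

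There is no real obstacle here. The only point requiring care is the bookkeeping of which $v_{x,y}$ are frozen to $1$, so that the variable count is exactly $n(n-3)/2$. The mixed signs coming from $(-1)^\sigma$ are harmless for this statement, since it asserts only integrality, not positivity of the coefficients. Positivity is the genuinely hard, computer-assisted matter treated later in the paper.
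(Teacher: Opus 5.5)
Your proposal is correct and takes the same approach as the paper. The paper simply observes that $u\mapsto 1+u$ and $\mu$ are polynomial maps and that the Pfaffian is a polynomial in the entries $Q_{ij}$. You spell out the same argument in more detail: each $Q_{ij}$ is a product of factors $1+u_{x,y}$, the Pfaffian is a $\pm1$-signed sum of products of entries, and the frozen variables leave exactly $n(n-3)/2$ coordinates.
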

\begin{proof}
The Pfaffian of $Q$ is a polynomial in the variables $Q_{ij}$. Both 
$u \mapsto v = 1+u$ and $\mu$ are polynomial functions, 
therefore the claim. 
\end{proof}

\begin{defi}
\label{defi:succeq}
Consider the following order relation for polynomials $f(x)$ in variables $x_1, \ldots, x_l$:
$f \succeq g \iff f-g$ has non-negative coefficients, i.e. every coefficient of the polynomial $f-g$ is non-negative.
\end{defi}

Note that $f \succeq 0 $ if and only if $f$ has only non-negative coefficients,
and therefore 
 $\forall x_1\geq 0, \ldots, x_l\geq 0$, 
 $f(x_1,\ldots, x_l) \geq f(0) \geq 0$.

\begin{lemma}
\label{lemma:fgeq1}
If $f(x)\succeq 1$, then for all positive values of the variables $x_j$ 
the inequality $f(x) \geq 1$ holds. 
\end{lemma}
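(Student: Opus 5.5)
The plan is to reduce the claim to the remark made just after Definition \ref{defi:succeq}: a polynomial with only non-negative coefficients is non-negative on the non-negative orthant. Since $f \succeq 1$ means that $f-1$ has only non-negative coefficients, I will set $g = f - 1$ and show $g(x)\geq 0$ whenever all $x_j\geq 0$ (in particular whenever all $x_j>0$). That gives $f(x) = 1 + g(x) \geq 1$.

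To prove $g(x) \geq 0$, I will write $g$ as a finite sum of monomials,
\[
g(x) = \sum_{\alpha} c_\alpha\, x_1^{\alpha_1}\cdots x_l^{\alpha_l},
\]
where each $c_\alpha\geq 0$ by hypothesis. For $x_j\geq 0$, each power $x_j^{\alpha_j}$ is non-negative, with the convention $0^0=1$ for the constant term. Each monomial is a product of non-negative reals, hence non-negative, and multiplying by $c_\alpha\geq 0$ keeps it non-negative. A finite sum of non-negative terms is non-negative.

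One can say slightly more. The constant coefficient of $f$ is $f(0)$, and $f\succeq 1$ forces $f(0)\geq 1$. Dropping all non-constant monomials of $f$, each of which is non-negative, gives $f(x)\geq f(0)\geq 1$; this is the chain $f(x)\geq f(0)$ already noted in the text.

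There is no real obstacle here. The only point needing care is that the statement says ``positive values'' while the argument works for all non-negative values. This matters for the intended application: the lemma will be applied to $p_n(u)$ with $u_{ij}\geq 0$, where some $u_{ij}$ may vanish (for instance at $u=0$ the Pfaffian of $\mu(1,\ldots,1)$ must equal exactly $p_n(0)$). So I will state and prove the lemma on the closed orthant $[0,+\infty)^l$, which contains the positive case.
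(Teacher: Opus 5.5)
Your proof is correct and is essentially the paper's argument: since $f-1$ has only non-negative coefficients, $f(x)-1 \geq f(0)-1 \geq 0$ on the non-negative orthant. Your point that the argument covers all non-negative values, not just positive ones, is also what the paper's proof actually shows, and it is what Corollary~\ref{coro:mainineq} needs.
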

\begin{proof}
The polynomial $f(x) - 1$ has only positive non-zero coefficients, hence 
$f(0) \geq 1$. 
Moreover, for non-negative values of the $x_j$ the inequality $f(x) - 1 \geq f(0)-1\geq 0$ holds.
\end{proof}

From now on, the relation $\succeq$ is meant to be in $\RR[u_{ij}]$, 
so that when polynomials in other variables (such as $Q_{ij}$ or $v_{ij}$)
are handled, the relation holds once all the proper substitutions are 
in place, and the resulting polynomials have variables $u_{ij}$. 
This is important, because sometimes it is simple to write 
a polynomial in $v_{ij}$ which in reality represents a polynomial 
in $u_{ij}$.

\begin{propo}
\label{propo:uij6}
For $n=4,6$, the Pfaffian polynomial $p_n(u)$ of proposition \ref{propo:uij}
satisfies the inequality 
$ p_n(u) \succeq 1$.
\end{propo}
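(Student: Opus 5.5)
For $n=4$ the claim is immediate. By \ref{propo:4x4bis} the one-bordered matrix has $\pfaff B = v_{12}-1+v_{23}$. Substituting $v_{ij}=1+u_{ij}$ gives
\[
p_4(u)=1+u_{12}+u_{23}\succeq 1 .
\]

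For $n=6$ I would reuse the decomposition from the proof of \ref{propo:n6}, but now track coefficients in $u$ rather than numerical inequalities. The expansion along the last column groups $\pfaff B$ into three blocks:
\begin{align*}
T_1 &= B[\hat 1,2,3,4,5,\hat 6]-B[1,\hat 2,3,4,5,\hat 6],\\
T_2 &= B[1,2,\hat 3,4,5,\hat 6],\\
T_3 &= B[1,2,3,4,\hat 5,\hat 6]-B[1,2,3,\hat 4,5,\hat 6].
\end{align*}
I will show $T_1\succeq 0$, $T_2\succeq 1$ and $T_3\succeq 0$. Since $\succeq$ is additive, this gives $p_6\succeq 1$.

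\textbf{Tools.} Every monomial $\prod v_{xy}$ becomes $\prod(1+u_{xy})$, which has nonnegative coefficients. Moreover, if $M\succeq M'$ as products of $v$'s (meaning $M'$ divides $M$), then $M-M'\succeq 0$. The closure properties I need are: sums and products of $\succeq 0$ polynomials are $\succeq 0$, and $(a-1)\succeq 0$ for every monomial $a$ in the $v$'s. The one subtlety is that the earlier proof's steps of the form ``$\ge$'' used $v\ge1$ numerically. I must replace each of them by a coefficientwise statement, so that the differences discarded are themselves $\succeq 0$.

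\textbf{Block $T_2$.} The proof of \ref{propo:n6} gives
\[
T_2=v_{14}v_{25}\bigl(v_{12}v_{13}v_{35}v_{45}-1+v_{14}v_{24}\bigr).
\]
Here $v_{12}v_{13}v_{35}v_{45}-1\succeq0$ and $v_{14}v_{24}\succeq 1$, so the bracket is $\succeq 1$. Multiplying by $v_{14}v_{25}\succeq 1$ keeps it $\succeq 1$.

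\textbf{Block $T_1$.} The earlier proof writes $T_1$ as $(B_{25}-1)B_{34}$ plus
\[
v_{14}v_{25}v_{35}\,(v_{13}v_{23}v_{24}v_{25}v_{45}-v_{24}v_{25}-v_{13}v_{45}+1).
\]
The first term is $\succeq0$. For the bracket, I split
\[
v_{13}v_{23}v_{24}v_{25}v_{45}=v_{13}v_{24}v_{25}v_{45}+(v_{23}-1)\,v_{13}v_{24}v_{25}v_{45}.
\]
The second piece is $\succeq0$, and the remaining part factors as
\[
(v_{13}v_{45}-1)(v_{24}v_{25}-1),
\]
a product of two $\succeq0$ factors. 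Multiplying by $v_{14}v_{25}v_{35}$ preserves $\succeq 0$.

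\textbf{Block $T_3$.} The same argument applies. $T_3$ equals $(B_{14}-1)B_{23}$ plus
\[
v_{13}v_{14}v_{25}\,(v_{12}v_{14}v_{24}v_{34}v_{35}-v_{14}v_{24}-v_{12}v_{35}+v_{23}v_{24}v_{25}).
\]
I write $v_{12}v_{14}v_{24}v_{34}v_{35}=v_{12}v_{14}v_{24}v_{35}+(v_{34}-1)(\cdots)$ and $v_{23}v_{24}v_{25}=1+(v_{23}v_{24}v_{25}-1)$. The bracket then becomes
\[
(v_{12}v_{35}-1)(v_{14}v_{24}-1)
\]
plus terms that are $\succeq0$.

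The main obstacle is only bookkeeping. I must verify the identities for $T_1$ and $T_3$ exactly, since they are quoted from \ref{propo:n6}, and check that every discarded term is genuinely a polynomial with nonnegative coefficients in $u$, not merely nonnegative. If any regrouping fails, the fallback is direct symbolic expansion of $p_6$ in the five variables $u_{12},u_{13},u_{23},u_{24},u_{34}$ and the others. This is a finite check with constant term $p_6(0)$, and $p_6(0)=1$ because the all-ones one-bordered matrix has Pfaffian $1$.
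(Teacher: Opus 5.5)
Your proof is correct and follows the paper's approach. The paper's own proof just says to rerun the steps of Proposition \ref{propo:n6} with $\succeq$ in place of $\ge$, using $B_{ij}\succeq 1$ and $v_{ij}\succeq 1$. Your version does this and makes explicit the coefficientwise splits, such as $v_{23}=1+(v_{23}-1)$ and $v_{34}=1+(v_{34}-1)$, that replace the numerical ``$\ge$'' steps there.

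On the bookkeeping you flagged: two of the identities you copied from the proof of Proposition \ref{propo:n6} inherit typos from the paper.
\begin{itemize}
\item Since $B_{24}=v_{14}v_{24}v_{25}$, the correct identity is $T_2=v_{14}v_{25}(v_{12}v_{13}v_{35}v_{45}-1+v_{24})$, not $\ldots+v_{14}v_{24}$.
\item Since $B_{13}B_{25}=v_{13}v_{14}v_{25}$ (the paper used $B_{23}$ in place of $B_{13}$), the bracket in $T_3$ is $v_{12}v_{14}v_{24}v_{34}v_{35}-v_{14}v_{24}-v_{12}v_{35}+1$.
\end{itemize}
With these corrections your argument goes through unchanged, and your split of $v_{23}v_{24}v_{25}$ becomes unnecessary. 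Your $T_1$ identity is correct as stated.
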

\begin{proof}
For $n=4$, 
\begin{align*}
Q= \begin{bmatrix}
 0 & v_{12} & 1  &1\\
 -v_{12} & 0 & v_{23} & 1\\
 -1 &  -v_{23} & 0 & 1  \\
 -1 & -1 & -1 & 0 
\end{bmatrix},
\end{align*}
$\implies$
$\pfaff Q = v_{12} - 1 + v_{23} = (u_{12} +1) -1  + (u_{23}+1) \succeq 1$. 

In the same way, for $n=6$, 
consider the same steps in the proof of \ref{propo:n6}. 
Each $B_{ij}$ satisfies $B_{ij} \succeq 1$,  
hence $(B_{25} - 1)B_{34} \succeq 0$. Furthermore,
each $v_{ij}$ (as polynomial in $u_{ij}$) satisfies $v_{ij} \succeq 1$, 
therefore one can follow all the same steps in the proof of \ref{propo:n6} 
to arrive at the conclusion that $p_6(u) \succeq 1$.  
\end{proof}

\begin{remark}
\label{remark:otherUL}
Consider the 1-bordered $6\times 6$ matrix 
\[\small
\begin{bmatrix}
0 & Q_{12} & Q_{13} & Q_{14} & 1 & 1\\
- Q_{12} & 0 & Q_{23} & Q_{24} & Q_{25} & 1\\
- Q_{13} & - Q_{23} & 0 & Q_{34} & Q_{35} & 1\\
- Q_{14} & - Q_{24} & - Q_{34} & 0 & Q_{45} & 1\\
-1 & - Q_{25} & - Q_{35} & - Q_{45} & 0 & 1\\
-1 & -1 & -1 & -1 & -1 & 0
\end{bmatrix}
\]
With elementary row and column operations can be written (use the entries in $(5,6)$ and $(6,5)$ 
to set all the resulting zeros) as follows:
\[\small
\begin{bmatrix}0 & Q_{12} + Q_{25} - 1 & Q_{13} + Q_{35} - 1 & Q_{14} + Q_{45} - 1 & 0 & 0\\
- Q_{12} - Q_{25} + 1 & 0 & Q_{23} - Q_{25} + Q_{35} & Q_{24} - Q_{25} + Q_{45} & 0 & 0\\
- Q_{13} - Q_{35} + 1 & - Q_{23} + Q_{25} - Q_{35} & 0 & Q_{34} - Q_{35} + Q_{45} & 0 & 0\\
- Q_{14} - Q_{45} + 1 & - Q_{24} + Q_{25} - Q_{45} & - Q_{34} + Q_{35} - Q_{45} & 0 & 0 & 0\\
0 & 0 & 0 & 0 & 0 & 1\\
0 & 0 & 0 & 0 & -1 & 0
\end{bmatrix}
\]
and the question is whether the $4\times 4$ UL submatrix
belongs to $\matrices_4$, when $Q$ belongs to $\matrices_6$. Unfortunately, this is not the case,
in general. 
For example, 
the following matrix belongs to $\matrices_6$
\[
\begin{bmatrix}0 & 4 & 3 & 2 & 1 & 1\\
-4 & 0 & 8 & 5 & 2 & 1\\
-3 & -8 & 0 & 11 & 4 & 1\\
-2 & -5 & -11 & 0 & 5 & 1\\
-1 & -2 & -4 & -5 & 0 & 1\\
-1 & -1 & -1 & -1 & -1 & 0
\end{bmatrix}
\]
while the UL submatrix after the above row and columns operations is
\[
\begin{bmatrix}0 & 5 & 6 & 6\\
-5 & 0 & 10 & 8\\
-6 & -10 & 0 & 12\\
-6 & -8 & -12 & 0
\end{bmatrix}
\]
which does not satisfy \aref{monotone}. As show in remark \ref{remark:counterexamplen6}, 
property \aref{monotone} cannot be dropped from definition \ref{defi:T}. 
\end{remark}

\section{Symbolic Computations on Pfaffian Polynomials in Variables $u_{ij}$ and $v_{ij}$}
\label{section:3} 

The Pfaffian of a $n\times n$ one-bordered matrix $B$ is the alternating
sum of $(n-1)!!$ monomials in $B_{ij}$, one for each partition of $\{1,\ldots, n\}$. 
After the substitution of \ref{defi:vij}, 
each monomial is transformed into a monomial in the variables $v_{ij}$, 
and the Pfaffian is written as a polynomial with integer coefficients, in the variables
$v_{ij}$, with $(n-1)!!$ terms.  
The polynomial $p_n(u)$ of \ref{propo:uij}
is obtained by substituting $v_{ij} = 1 + u_{ij}$. 
It was shown in Proposition \ref{propo:uij6} that $p_n(u)\succeq 1$ for $n=4$ 
and $n=6$. We are going to show that $p_n(u) \succeq 1$ also for some values 
of $n\geq 8$. 
The problem is that the number of terms in $p_n(u)$ grows very rapidly.

For $n=8$, the polynomial $p_8(u)$ of \ref{propo:uij} 
in $8(8-3)/2=20$ variables has 6727104  coefficients, and of course 
they cannot be computed by hand. Even for a machine,
computing \emph{all} the coefficients of $p_8(u)$ is not a fast task. 
Defining a polynomial for $n\geq 10$ is not reasonable, as a general approach.  
Trying to do so, results in a out-of-memory exception for the running python. 
For $n\geq 10$,  
a direct symbolic computation of $p_n(u)$  (in the ring of polynomials with integer coefficients) 
fails, on all major CAS I have tried (maple, mathematica, MAGMA, python-sympy).

Hence, we introduce an algorithm for proving 
that $p_n(u) \succeq 1$ \emph{without} actually knowing all its
coefficients at once. 
The idea is to operate on the original polynomial in the variables $v_{ij}$, 
and, without expanding the substitution $v_{ij} \leftarrow 1+u_{ij}$, to simplify 
and eliminate the variables $v_{ij}$ one at the time, until it is trivial 
to prove that the final polynomial $f(v)$ satisfies $f(1+u) \succeq 1$. 
The final trivial case is as follows: 
given a generic polynomial $Q(\vv)$ with integer coefficients in the variables $v_{ij}$,
write it as $q_+(\vv) - q_-(\vv)$, 
where both $q_+(\vv)$ and $q_-(\vv)$ have non-negative coefficients. 
If one of $q_+$ or $q_-$ is zero, or if $q_-$ has degree $0$, 
it is easy to determine if $q_+(1+\vu) - q_-(1+\vu) \succeq 1$.  

The procedure used to eliminate the variables $v_{ij}$, one at the time, 
is as follows. 
If there is at least a variable $v_{ij}$ of degree $1$, then consider the following lemma (recall that the relation $\succeq$ is evaluated after all the substitutions
$v_{ij} \leftarrow 1+u_{ij}$ have been performed). 
\begin{lemma}
\label{lemma:L1}
Let $p(v,\vu)$ be a polynomial in variables $v,u_1,\ldots, v_l$,
of degree $1$ in the variable $v$.  
If $p(v,\vu) = a(\vu) v + b(\vu)$, where $a(\vu) \succeq 0$, 
then  $p(1+u,\vu) \succeq a(\vu) + b(\vu)$, and therefore
$p(1+u,\vu) \succeq L $ $\iff$ $a(\vu) + b(\vu) \succeq L$.  
\end{lemma}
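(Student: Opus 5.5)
Write $p(v,\vu) = a(\vu)v + b(\vu)$ with $a(\vu)\succeq 0$, and substitute $v \leftarrow 1+u$, where $u$ is a fresh variable independent of $u_1,\ldots,u_l$. Everything is then read in the polynomial ring $\RR[u,u_1,\ldots,u_l]$, as the convention after \ref{lemma:fgeq1} requires.

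The substitution gives the identity
\[
p(1+u,\vu) = a(\vu) + b(\vu) + u\,a(\vu).
\]
Since $a(\vu)\succeq 0$ has only non-negative coefficients, multiplying by the monomial $u$ only shifts the exponent of $u$ in each term, so $u\,a(\vu)\succeq 0$. Hence $p(1+u,\vu) - (a(\vu)+b(\vu)) = u\,a(\vu) \succeq 0$, which is the first claim.

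For the equivalence, take the direction ($\Leftarrow$) first. The relation $\succeq$ is transitive and compatible with addition, because a sum of polynomials with non-negative coefficients again has non-negative coefficients. So $a+b\succeq L$ together with $u\,a\succeq 0$ gives $p(1+u,\vu)\succeq L$.

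For ($\Rightarrow$), I use that $a+b$ does not involve $u$. The monomials of $p(1+u,\vu)$ that are free of $u$ are exactly those of $a(\vu)+b(\vu)$, since every monomial of $u\,a(\vu)$ contains $u$. I assume $L$ does not involve $u$, which is the case in the intended use, e.g. $L=1$ or any polynomial in the remaining variables. Then the $u$-free monomials of $p(1+u,\vu)-L$ are those of $a+b-L$. If $p(1+u,\vu)-L$ has only non-negative coefficients, so in particular does its $u$-free part, and therefore $a+b\succeq L$.

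The only delicate point is this last coefficient comparison. It needs the hypothesis that $u$ is a new variable, distinct from the $u_j$, and that $L$ is free of $u$. I would state both explicitly in the proof; nothing else beyond the definition \ref{defi:succeq} is required.
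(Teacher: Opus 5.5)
Your proof is correct and follows the paper's own argument. You expand $p(1+u,\vu)=u\,a(\vu)+\bigl(a(\vu)+b(\vu)\bigr)$, use $u\,a(\vu)\succeq 0$ for the first claim and for ($\Leftarrow$), and compare the $u$-free coefficients for ($\Rightarrow$). You also require explicitly that $u$ be a fresh variable and that $L$ not involve $u$; the paper uses this only implicitly when it reads off ``the term of degree $0$ in $u$'', and it is genuinely needed for the equivalence (e.g.\ $L=u$, $a=1$, $b=-1$ breaks it).
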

\begin{proof} 
Since $p(1+u,\vu)  = a(\vu) (1+u) + b(\vu) = a(\vu) u + (a(\vu) + b(\vu)$, 
if $a(\vu) \succeq 0$, then 
  $a(\vu) u \succeq 0 \implies p(1+u,\vu) - a(\vu) - b(\vu) \succeq 0$,  
 and hence 
 $p(v,\vu) \succeq a(\vu) + b(\vu)$. 
This implies that if $a(\vu) + b(\vu) \succeq L$, 
then $p(v,\vu) \succeq L$. Conversely, 
if $p(v,\vu) \succeq L$, it must be $a(\vu) \succeq 0$ (which is the coefficient of $u$) and 
  $a(\vu) + b(\vu) \succeq L$ (the term of degree 0 in $u$). 
\end{proof}

By applying Lemma \ref{lemma:L1} a iteratively, one can set $v_{ij} = 1$
for all variables of degree $1$, 
provided the relative coefficients $a$ satisfy $a\succeq 0$. If $a\not\succeq 0$, 
then the claim is false, and the procedure ends in a failure.  
If there are no variables of degree $1$, 
but there is at least a variable $v_{ij}$ of degree $2$, then consider the following lemma. 

\begin{lemma}
\label{lemma:L2}
Let $p(v,\vu)$ be a polynomial in variables $v,u_1,\ldots, v_l$,
of degree $2$ in the variable $v$.  
If $p(v,\vu) = a(\vu) v^2 + b(\vu) v + c(\vu$, where $a(\vu) \succeq 0$, 
$2a(\vu) + b(\vu) \succeq 0$, 
then  $p(1+u,\vu) \succeq a(\vu) + b(\vu) + c(\vu)$, and therefore
$p(1+u,\vu) \succeq L $ $\iff$ $p(1,\vu) = a(\vu) + b(\vu) \succeq L$. 

More generally, if $p(v,\vu)$ has degree $d$ in $v$, 
\[
p(v, \vu)  = \sum_{k=0}^d a_k(\vu) v^k
\]
if  for $j=1,\ldots, d$ 
\[
\sum_{k = j \ldots d } a_k(\vu) \binom{k}{j} \succeq 0 
\]  
then $p(1+u,\vu) \succeq L$ 
$\iff$ 
$ p(1,\vu) = \sum_{k=0}^d a_k(\vu) \succeq L $. 
\end{lemma}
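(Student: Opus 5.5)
The plan is to expand $p(1+u,\vu)$ by the binomial theorem and read off its coefficients as a polynomial in $u$; this is the same idea as in the proof of Lemma \ref{lemma:L1}. Writing $v = 1+u$ gives
\[
p(1+u,\vu) = \sum_{k=0}^d a_k(\vu)\sum_{j=0}^{k}\binom{k}{j}u^j
= \sum_{j=0}^{d} u^j \Bigl( \sum_{k=j}^{d} \binom{k}{j} a_k(\vu)\Bigr).
\]
So the coefficient of $u^j$ (a polynomial in $\vu$) is $c_j(\vu) = \sum_{k=j}^d \binom{k}{j} a_k(\vu)$. The coefficient of $u^0$ is $c_0 = \sum_k a_k(\vu) = p(1,\vu)$.

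The key observation is how $\succeq$ behaves with respect to the extra variable $u$. Since $u$ is a variable distinct from those in $\vu$, the monomials of $p(1+u,\vu)$ split according to their degree in $u$. Hence $p(1+u,\vu) - L \succeq 0$ holds if and only if $c_0 - L \succeq 0$ and $c_j \succeq 0$ for every $j\ge 1$. This uses that $L$ does not involve $u$; in the applications $L$ is a constant or a polynomial in $\vu$, and I would state this assumption explicitly.

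The hypotheses of the lemma are exactly $c_j \succeq 0$ for $j=1,\dots,d$. Under them the equivalence $p(1+u,\vu)\succeq L \iff p(1,\vu)\succeq L$ follows at once in both directions. The same argument also gives $p(1+u,\vu)\succeq p(1,\vu)$.

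The degree-$2$ case is the specialization $d=2$. Here $c_2 = a$ and $c_1 = 2a+b$, so the stated conditions $a\succeq 0$ and $2a+b\succeq 0$ are precisely $c_2, c_1 \succeq 0$. Also $p(1,\vu)=a+b+c$; the statement omits $c$ there, which I would treat as a typo. There is no real obstacle. The only points needing care are the bookkeeping of the binomial re-indexing (swapping the sums over $k$ and $j$) and the remark that comparing coefficients in $u$ separately is legitimate because $\succeq$ is defined coefficientwise in the full variable set $(u,\vu)$.
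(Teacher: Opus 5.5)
Your proposal is correct and is essentially the paper's proof: expand $p(1+u,\vu)$ binomially, swap the sums so that the coefficient of $u^j$ is $\sum_{k=j}^d \binom{k}{j} a_k(\vu)$, and compare coefficients of each power of $u$ separately. Your two side remarks are accurate and match what the paper's proof actually uses: $L$ must not involve $u$, and in the degree-$2$ case $p(1,\vu)$ should read $a+b+c$.
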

\begin{proof} 
Since $p(1+u,\vu) = a(\vu) (1+u)^2 + b(\vu) (1+u) + c(\vu) 
= a(\vu) u^2 + (2a(\vu) + b(\vu) ) u + (a(\vu) + b(\vu) + c(\vu))$, 
if $a(\vu) \succeq 0$ and $2a(\vu) + b(\vu) \succeq 0$, 
 then  $a(\vu) u^2 \succeq 0 $, 
 $(2a(\vu) + b(\vu))u \succeq 0$, 
 and therefore 
$p(1+u,\vu) \succeq a(\vu) + b(\vu) + c(\vu)$.
If $a(\vu)+b(\vu)+c(\vu) \succeq L$, then 
$p(v,\vu) \succeq L$. 
Conversely,
if 
$p(1+u,\vu) \succeq L$,
then 
$a(\vu) \succeq 0$  (the coefficient of $u^2$),
$2a(\vu) + b(\vu) \succeq 0$ (the coefficient of $u$), 
and  the terms of degree $0$ in $u$ has to satisfy
$a(\vu) + b(\vu) + c(\vu) \succeq L$. 

More generally, 
\begin{align*}
p(1+u, \vu)  & = \sum_{k=0}^d a_k(\vu) (1+u)^k \\ 
& = \sum_{k=0}^d a_k(\vu) \left( \sum_{j=0}^k \binom{k}{j} u^j  \right) \\
& = \sum_{j=0}^d \left(
\sum_{k=j\ldots d} a_k(\vu) \binom{k}{j} 
\right) u^j, 
\end{align*}
hence $p(1+u,\vu) \succeq L$ 
$\iff$ 
for $j=1\ldots d$ the inequality 
$\sum_{k=j\ldots d} a_k(\vu) \binom{k}{j} \succeq 0$ holds,
and (for $j=0$) the inequality
$\sum_{k=0\ldots d} a_k(\vu) \binom{k}{0} \succeq L$ holds,  
as claimed. 
\end{proof}

By applying Lemma \ref{lemma:L2} a iteratively, one can set $v_{ij} = 1$ 
(and thus $u_{ij}=0$, in the corresponding $p_n(u)$)
for all variables of degree $2$, 
provided the relative coefficients $a,b$ satisfy $a\succeq 0$
and $2a+b\succeq 0$. If $a\not\succeq 0$, or $2a+b\not\succeq 0$, 
then the claim is false, and the procedure ends in a failure.  
If there are no variables of degree $1$ or $2$, 
the same procedure can be applied to terms of degree $3$, and so on. 
To avoid overloading the branching of too many function calls, 
the following lemma can be applied for degree $4$ and bigger. 
It is necessary to iterate over all the monomials dividing 
all the terms $q_-(\vv)$ (the terms with negative coefficients in the
polynomial $Q$), which computationally does not need to allocate all the coefficients 
of the polynomials in memory. 

\begin{lemma}
\label{lemma:L3}
Let $Q(\vv) = \sum_{k=1}^l \alpha_k(\vv) - \sum_{k=1}^{l'} \beta_k(\vv)$ be a polynomial
in variables $\vv=(v_1,v_2, \ldots)$, where for each index $k$ the terms $\alpha_k$ and $\beta_k$ are monomials 
in $\vv$. Let $d_{k,i}$ and $d'_{k,i}$ the indices such that 
$\alpha_k(\vv) = \prod_i v_i^{d_{k,i}}$
and 
$\beta_k(\vv) = \prod_i v_i^{d'_{k,i}}$.
Then 
$Q(1+\vu) \succeq 0$ $\iff$
for each monomial $\prod_{i} v_{i}^{j_i}$ dividing 
  at least one of the  terms $\beta_k(\vv)$, the following inequality holds:
\[
\sum_{k=1}^l \prod_{i} \binom{d_{k,i}}{j_i} \geq  \sum_{k=1}^{l'} \prod_{i} \binom{d'_{k,i}}{j_i}~. 
\]
\end{lemma}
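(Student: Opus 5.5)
The plan is to compute every coefficient of $Q(1+\vu)$ explicitly in terms of the exponents $d_{k,i}$ and $d'_{k,i}$, in the same way as in the general part of the proof of Lemma \ref{lemma:L2}. Then I will observe that the only coefficients that can possibly be negative are those indexed by monomials dividing some $\beta_k$.

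\emph{Expansion of a single monomial.} For a monomial $\prod_i v_i^{d_i}$, the substitution $v_i \leftarrow 1+u_i$ gives, by the binomial theorem applied to each factor,
\[
\prod_i (1+u_i)^{d_i} = \sum_{\vm} \Bigl(\prod_i \binom{d_i}{j_i}\Bigr) \prod_i u_i^{j_i}.
\]
Here the sum runs over all multi-indices $\vm=(j_1,j_2,\ldots)$ of non-negative integers, with the convention $\binom{d}{j}=0$ for $j>d$. So only the multi-indices with $j_i\le d_i$ for every $i$ contribute, that is, those for which $\prod_i v_i^{j_i}$ divides $\prod_i v_i^{d_i}$.

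\emph{Coefficients of $Q(1+\vu)$.} Summing by linearity over the terms $\alpha_k$ and $\beta_k$, the coefficient of $\prod_i u_i^{j_i}$ in $Q(1+\vu)$ is
\[
c_{\vm} = \sum_{k=1}^l \prod_i \binom{d_{k,i}}{j_i} - \sum_{k=1}^{l'} \prod_i \binom{d'_{k,i}}{j_i}.
\]
This formula holds regardless of whether some $\alpha_k$ coincides with some $\beta_{k'}$: any cancellation already present in $Q$ is reflected automatically, because the map from monomials in $\vv$ to polynomials in $\vu$ is linear. By Definition \ref{defi:succeq}, $Q(1+\vu)\succeq 0$ holds if and only if $c_{\vm}\ge 0$ for every multi-index $\vm$.

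\emph{Reduction to divisors of the $\beta_k$.} If $\prod_i v_i^{j_i}$ divides none of the $\beta_k$, then for each $k$ there is an index $i$ with $j_i>d'_{k,i}$. Hence every product $\prod_i\binom{d'_{k,i}}{j_i}$ vanishes, and $c_{\vm}$ equals a sum of non-negative integers, so $c_{\vm}\ge 0$ automatically. Therefore the condition $c_{\vm}\ge 0$ for all $\vm$ is equivalent to $c_{\vm}\ge 0$ for those $\vm$ whose monomial divides at least one $\beta_k$, and this is exactly the stated inequality. Both directions of the equivalence follow at once. The constant monomial $1$ (that is, $\vm=0$) is included among the divisors, and its inequality reads $l\ge l'$, i.e.\ $Q(1)\ge 0$.

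There is no real obstacle here. The only points needing care are the convention $\binom{d}{j}=0$ for $j>d$, which is what makes the reduction step work, and the remark that a possibly non-reduced presentation of $Q$ as $\sum\alpha_k-\sum\beta_k$ does not affect the formula for $c_{\vm}$.
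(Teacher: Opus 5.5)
Your proposal is correct and takes the same route as the paper. Both expand each monomial under $v_i \leftarrow 1+u_i$ with the binomial theorem, read off the coefficient of $\prod_i u_i^{j_i}$ as the difference of the two binomial sums, and note that only multi-indices whose monomial divides some $\beta_k$ can give a negative coefficient. Your explicit convention $\binom{d}{j}=0$ for $j>d$ and your remark on non-reduced presentations just make both directions of the equivalence a bit more transparent than in the paper.
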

\begin{proof} 
In the following equations, $j_* = \zero \ldots d_{k,*}$ means 
that the vector $j_* = (j_1, j_2, \ldots)$ iterates over the Cartesian product
$[0,d_{k,1}] \times [0,d_{k,2}] \ldots $, which means for each $i$ the iteration 
runs over $j_i = 0 \ldots d_{k,i}$.  
\begin{align*}
Q(1+\vu)  &  = 
\sum_{k=1}^l \alpha_k(1+\vu) - \sum_{k=1}^{l'} \beta_k(1+\vu) \\
& = 
\sum_{k=1}^l  \prod_i (1+u_i)^{d_{k,i}} - 
\sum_{k=1}^{l'}  \prod_i (1+u_i)^{d'_{k,i}} \\
& = 
\sum_{k=1}^l  \prod_i \left( \sum_{j_i=0}^{d_{k,i}}   \binom{d_{k,i}}{j_i} u_i^{j_i} \right) -  
\sum_{k=1}^{l'}  \prod_i \left( \sum_{j_i=0}^{d'_{k,i}}   \binom{d'_{k,i}}{j_i} u_i^{j_i} \right)  \\ 
& = 
\sum_{k=1}^l \sum_{j_* = \zero \ldots d_{k,*} }  \prod_{i}\left( \binom{d_{k,i}}{j_i}  u_i^{j_i} \right) 
-
\sum_{k=1}^{l'} \sum_{j_* = \zero \ldots d'_{k,*} }  \prod_{i}\left( \binom{d'_{k,i}}{j_i}  u_i^{j_i} \right) .
\end{align*}
Thus, the monomial terms in $Q(1+\vu)$ with possible negative coefficients 
are those  written as $\prod_{i}\binom{d'_{k,i}}{j_i}  u_i^{j_i} $
with $0 \leq j_i \leq d'_{k,i}$ for each $i$ and some $k$, which means they are the exponents 
of a monomial $\prod_{i} v_{i}^{j_i}$ dividing 
  at least one of the  terms $\beta_k(\vv)$. Its coefficient is 
\[
\sum_{k=1}^l\prod_{i} \binom{d_{k,i}}{j_i}    - 
\sum_{k=1}^{l'} \prod_{i} \binom{d'_{k,i}}{j_i} , 
\]
which concludes the proof. 
\end{proof}

The algorithm goes as follows. Let $P(v_{ij})$ be
a polynomial in the variables 
$v_{ij}$. 
One at the time, eliminate all the terms
$v_{ij}$ of degree $1$, $2$ and $3$  by Lemma \ref{lemma:L1} and \ref{lemma:L2}. 
If any elimination fails, it means that $P(1+u_{ij})\not\succeq 0$. 
In case all the terms left have degree at least $4$,
apply \ref{lemma:L3}. 
Thus the algorithm can terminate with 
a positive result (if $Q(\vv) \succeq 1$) or a failure. 
A worked-out case ($n=6$) is written below explicitly, in 
remark \ref{rem:n6}. 

\begin{theo}[Computer-assisted]
\label{theo:n14}
For even $n\leq 14$, the Pfaffian polynomial $p_n(u)$ defined in \ref{propo:uij} 
satisfies $p_n(u) \succeq 1$. 
\end{theo}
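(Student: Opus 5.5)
The plan is to run the elimination algorithm described just before the statement, for each even $n\in\{8,10,12,14\}$. The cases $n=4,6$ are already covered by Proposition \ref{propo:uij6}.

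\textbf{Step 1: the polynomial in the $v_{ij}$.} For fixed $n$, I would not expand $p_n(u)$ in the variables $u_{ij}$. Instead I would work with $\pfaff \mu(v)$ as a polynomial in the $v_{ij}$. This polynomial is a signed sum of at most $(n-1)!!$ monomials, one per perfect matching. Each matching $[r_1,s_1,\ldots,r_k,s_k]$ contributes
\[
\pm\prod_{t}\prod_{(x,y)\in[1,r_t]\times[s_t,n]} v_{x,y},
\]
so its exponent vector is computed directly, without any polynomial arithmetic. Terms that coincide are merged. It is convenient to build this sum recursively, via the expansion \eqref{eq:pfaff1} along the last column. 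Because $Q_{jn}=1$ there, the expansion reduces to an alternating sum of the $(n-2)$-Pfaffians $A[1,\ldots,\hat j,\ldots,n-1]$.

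\textbf{Step 2: one-variable reductions.} I would eliminate variables one at a time, always choosing a variable of lowest degree. If $v$ has degree $1$, with $P=a\,v+b$, I check that $a(1+u)\succeq 0$ and then replace $P$ by $P|_{v=1}=a+b$; Lemma \ref{lemma:L1} shows this is an equivalence for the target $\succeq 1$. If $v$ has degree $2$ or $3$, I check the binomial-sum conditions of Lemma \ref{lemma:L2} and again set $v=1$. The checks on $a$, $2a+b$, and so on are themselves statements of the form $\cdot\succeq 0$ on smaller polynomials. Each one is settled recursively by the same procedure, or, once every remaining degree is at least $4$, by the divisor criterion of Lemma \ref{lemma:L3}. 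Setting $v=1$ merges many monomials and produces cancellations, so the polynomial shrinks as the elimination proceeds. When no variables remain, the result is a constant, and I verify that it is at least $1$. By the equivalences in Lemmas \ref{lemma:L1}–\ref{lemma:L3}, the chain of reductions then proves $p_n(u)\succeq 1$.

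\textbf{Step 3: exact implementation.} All coefficients are integers, so the whole procedure runs in exact integer arithmetic, with each polynomial stored as a sparse dictionary from exponent vectors to coefficients. This makes the result a rigorous computer-assisted proof. I would also exploit the symmetry $i\mapsto n+1-i$ of the one-bordered structure where possible, to cut the work roughly in half.

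\textbf{Main obstacle.} The main difficulty is feasibility at $n=12$ and $n=14$. There are $13!!=135135$ matchings for $n=14$, which is manageable, but the auxiliary coefficient polynomials $a$, $2a+b,\ldots$ can branch into many recursive checks. In addition, Lemma \ref{lemma:L3} enumerates every monomial dividing some negative term, and that count grows exponentially in the degrees. The elimination order is therefore the critical choice. I would first try eliminating first the variables near the border, such as $v_{1,j}$ and $v_{i,n-1}$, which appear with low degree. If a check fails spuriously, meaning only that the chosen order was bad, I would reorder the variables and retry.
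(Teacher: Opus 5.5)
Your plan is the paper's own proof of Theorem \ref{theo:n14}: write $\pfaff\mu(v)$ as a signed sum of monomials in the $v_{ij}$, then eliminate the variables one at a time with Lemmas \ref{lemma:L1} and \ref{lemma:L2}, and use Lemma \ref{lemma:L3} once all remaining degrees are at least $4$. Two small corrections: since these lemmas are equivalences, your procedure decides $\succeq$ exactly, so no check can fail ``spuriously'' (the elimination order affects only running time, and a failure would refute $p_n\succeq 1$); and the reversal $i\mapsto n+1-i$ does not preserve one-bordered matrices, the usable symmetry being $v_{x,y}\mapsto v_{n-y,n-x}$, i.e.\ $i\mapsto n-i$ on the first $n-1$ indices.
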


As a consequence, the following Corollary holds. 
\begin{coro}[Computer-assisted] 
\label{coro:mainineq}
For even $n \leq 14$, 
let $Q \in \matrices_n$.  
Then the following inequality holds,
\begin{equation}
\label{eq:mainineq}
\pfaff Q \geq 
\left( \dfrac{Q_{1,n-1}}{Q_{1n}Q_{n-1,n}} \right)^{(n-2)/2} 
\left( \prod_{j=1,\ldots, n-1} Q_{jn}  \right),
\end{equation}
and therefore 
the Pfaffian of $Q$ is strictly positive. 
\end{coro}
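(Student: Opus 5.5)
The plan is to reduce the corollary to Theorem \ref{theo:n14} via the normalization of Proposition \ref{propo:factors}. Let $Q\in\matrices_n$ with $n$ even and $n\leq 14$.

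First, build the one-bordered matrix $B$ of Proposition \ref{propo:factors}. Divide the $j$-th row and column of $Q$ by $Q_{jn}$ for $j=1,\ldots,n-1$, and then rescale the upper-left $(n-1)\times(n-1)$ block by $Q'_{1,n-1}=Q_{1,n-1}/(Q_{1n}Q_{n-1,n})$. That proposition gives $B\in\bmatrices_n$ together with the identity \eqref{eq:factors}:
\[
\pfaff Q = \left( \dfrac{Q_{1,n-1}}{Q_{1n}Q_{n-1,n}} \right)^{(n-2)/2} \left( \prod_{j=1,\ldots, n-1} Q_{jn} \right) \pfaff B .
\]
By \aref{positive}, every $Q_{ij}$ with $i<j$ is positive. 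Hence the prefactor in front of $\pfaff B$ is strictly positive. It is also exactly the right-hand side of \eqref{eq:mainineq}. So it suffices to show that $\pfaff B\geq 1$.

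Next, apply Proposition \ref{propo:vij}. The map $\mu$ is a bijection onto $\bmatrices_n$, so there are unique parameters $v_{ij}\geq 1$ with $\mu(v_{ij})=B$. Setting $u_{ij}=v_{ij}-1\geq 0$ as in Definition \ref{defi:uij}, Proposition \ref{propo:uij} gives $\pfaff B = p_n(u)$ evaluated at this nonnegative point. Theorem \ref{theo:n14} gives $p_n(u)\succeq 1$ for even $n\leq 14$. Lemma \ref{lemma:fgeq1} then yields $p_n(u)\geq 1$ at every point with all $u_{ij}\geq 0$. The lemma is stated for positive values, but its proof works verbatim for nonnegative ones, since $f(x)-1\geq f(0)-1\geq 0$. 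Therefore $\pfaff B\geq 1$.

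Combining the two steps gives \eqref{eq:mainineq}. Strict positivity of $\pfaff Q$ follows because the right-hand side is a product of positive numbers.

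There is no real obstacle here; the substantive content is entirely in the computer-assisted Theorem \ref{theo:n14}. The one point needing care is that $B$ genuinely lies in $\bmatrices_n$, in particular that it satisfies \aref{monotone}, so that Proposition \ref{propo:vij} applies. Lemma \ref{lemma:multiplyrowcol} only guarantees \aref{positive} and \aref{quadratic} under rescaling. The monotonicity is handled inside the proof of Proposition \ref{propo:factors}, using both chains of \aref{monotone} for $Q$. The final scalar division of the upper-left block preserves all three axioms and the border of ones, so it causes no trouble.
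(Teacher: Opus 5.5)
Your proposal is correct and follows the paper's proof: you normalize $Q$ to a one-bordered $B$ via Proposition~\ref{propo:factors}, write $\pfaff B = p_n(u)$ at a point with $u_{ij}=v_{ij}-1\geq 0$ using Propositions~\ref{propo:vij} and~\ref{propo:uij}, and get $\pfaff B\geq 1$ from Theorem~\ref{theo:n14} and Lemma~\ref{lemma:fgeq1}. Your two extra remarks are accurate and fill in details the paper leaves implicit: Lemma~\ref{lemma:fgeq1} is needed at nonnegative points rather than strictly positive ones, and \aref{monotone} for $B$ comes from Proposition~\ref{propo:factors} rather than from Lemma~\ref{lemma:multiplyrowcol}.
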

\begin{proof}
By \ref{propo:factors}, there exists a one-bordered matrix $B \in \bmatrices_n$
such that 
\[ 
\pfaff Q  =
\left( \dfrac{Q_{1,n-1}}{Q_{1n}Q_{n-1,n}} \right)^{(n-2)/2} 
\left( \prod_{j=1,\ldots, n-1} Q_{jn}  \right)   \pfaff B. 
\]
By \ref{propo:uij} there exist
values of the variables $u_{ij}$ such that 
$\pfaff B = p_n(u)$, where  
$p_n$ is the Pfaffian polynomial of proposition \ref{propo:uij}.
Now, by theorem \ref{theo:n14}, $p_n \succeq 1$ for all even $n\leq 14$, 
and therefore by \ref{lemma:fgeq1} the inequality $p_n(u) \geq 1$ holds.  
\end{proof}

\begin{remark}
The running times (python interpreter \verb+pypy3+ on a PC with Intel Xeon CPU
E3-1245 v5 @ 3.50GHz)  for $n=8\ldots 14$ are listed in the following table: 

\begin{tabular}{|l|l|}
\hline 
n & \text{running time HH:MM:SS} \\
\hline
8 & 00:00:01 \\
10 & 00:00:02 \\ 
12 & 00:01:56 \\
14 & 103:49:30\\
\hline
\end{tabular}
\end{remark}

\begin{remark}
\label{rem:n6}
For $n=6$ the algorithm terminates by elimination of terms just of degree $1$,
without the application
of Lemma \ref{lemma:L2} or Lemma \ref{lemma:L3}.  We follow the calculations step-by-step as follows. 
\begin{dmath*}\small
Q = \left[\begin{matrix}0 & v_{12} v_{13} v_{14} & v_{13} v_{14} & v_{14} & 1 & 1\\- v_{12} v_{13} v_{14} & 0 & v_{13} v_{14} v_{23} v_{24} v_{25} & v_{14} v_{24} v_{25} & v_{25} & 1\\- v_{13} v_{14} & - v_{13} v_{14} v_{23} v_{24} v_{25} & 0 & v_{14} v_{24} v_{25} v_{34} v_{35} & v_{25} v_{35} & 1\\- v_{14} & - v_{14} v_{24} v_{25} & - v_{14} v_{24} v_{25} v_{34} v_{35} & 0 & v_{25} v_{35} v_{45} & 1\\-1 & - v_{25} & - v_{25} v_{35} & - v_{25} v_{35} v_{45} & 0 & 1\\-1 & -1 & -1 & -1 & -1 & 0\end{matrix}\right]
\end{dmath*}
\begin{dmath*}
\pfaff Q = v_{12} v_{13} v_{14}^{2} v_{24} v_{25} v_{34} v_{35} + v_{12} v_{13} v_{14} v_{25} v_{35} v_{45} - v_{12} v_{13} v_{14} v_{25} v_{35} + v_{13} v_{14}^{2} v_{23} v_{24} v_{25} - v_{13} v_{14}^{2} v_{24} v_{25} + v_{13} v_{14} v_{23} v_{24} v_{25}^{2} v_{35} v_{45} - v_{13} v_{14} v_{23} v_{24} v_{25} - v_{13} v_{14} v_{25} v_{35} v_{45} + v_{13} v_{14} v_{25} + v_{14} v_{24} v_{25}^{2} v_{34} v_{35} - v_{14} v_{24} v_{25}^{2} v_{35} - v_{14} v_{24} v_{25} v_{34} v_{35} + v_{14} v_{24} v_{25} + v_{14} v_{25} v_{35} - v_{14} v_{25} 
\end{dmath*}
After collecting $v_{14} v_{25}$ the Pfaffian can be written as
 \begin{dmath*}
   \pfaff Q = v_{14} v_{25} \left( v_{12} v_{13} v_{14} v_{24} v_{34} v_{35} + v_{12} v_{13} v_{35} v_{45} + v_{13} v_{14} v_{23} v_{24} + v_{13} v_{23} v_{24} v_{25} v_{35} v_{45} + v_{13} + v_{24} v_{25} v_{34} v_{35} + v_{24} + v_{35} - (v_{12} v_{13} v_{35} + v_{13} v_{14} v_{24} + v_{13} v_{23} v_{24} + v_{13} v_{35} v_{45} + v_{24} v_{25} v_{35} + v_{24} v_{34} v_{35} + 1) \right)  
   \end{dmath*}
It is clear that the term $v_{14} v_{25}$ can be dropped; 
this simple step is performed every time there is such a factor, and it
makes the procedure a little bit faster. 
 Let $P,p$ denote the following polynomials in the 9 $v$-variables 
 $v_{12}$, $v_{13}$, $v_{14}$, $v_{23}$, $v_{24}$, $v_{25}$, $v_{34}$, $v_{35}$, $v_{45}$: 
\begin{dgroup*}
\begin{dmath*}
P  = v_{12} v_{13} v_{14} v_{24} v_{34} v_{35} + v_{12} v_{13} v_{35} v_{45} + v_{13} v_{14} v_{23} v_{24} + v_{13} v_{23} v_{24} v_{25} v_{35} v_{45} + v_{13} + v_{24} v_{25} v_{34} v_{35} + v_{24} + v_{35}  
\end{dmath*}
\begin{dmath*}
p  = v_{12} v_{13} v_{35} + v_{13} v_{14} v_{24} + v_{13} v_{23} v_{24} + v_{13} v_{35} v_{45} + v_{24} v_{25} v_{35} + v_{24} v_{34} v_{35} + 1 
\end{dmath*}
\end{dgroup*}
Now, $P=Av_{14}+B$ and $p=av_{14} +b$ where 
\begin{dgroup*}
\begin{dmath*}
A  = v_{13} v_{24} (v_{12} v_{34} v_{35} + v_{23})
\end{dmath*}
\begin{dmath*}
a  = v_{13} v_{24},
\end{dmath*}
\end{dgroup*}
(and both $B$ and $b$ do not have the variable $v_{14}$).
Since $A-a \succeq 1$, by \ref{lemma:L1}
\begin{dmath*}
\pfaff Q  \succeq  A+B -a-b = v_{12} v_{13} v_{24} v_{34} v_{35} + v_{12} v_{13} v_{35} v_{45} + v_{13} v_{23} v_{24} v_{25} v_{35} v_{45} + v_{13} + v_{24} v_{25} v_{34} v_{35} + v_{24} + v_{35} - (v_{12} v_{13} v_{35} + v_{13} v_{24} + v_{13} v_{35} v_{45} + v_{24} v_{25} v_{35} + v_{24} v_{34} v_{35} + 1).
\end{dmath*}
The variable $v_{14}$ has been eliminated. 
Now, let $P,p$ denote the following polynomials in the remaining 8 $v$-variables 
 $v_{12}$, $v_{13}$, $v_{23}$, $v_{24}$, $v_{25}$, $v_{34}$, $v_{35}$, $v_{45}$: 
\begin{dgroup*}
\begin{dmath*}
P  = v_{12} v_{13} v_{24} v_{34} v_{35} + v_{12} v_{13} v_{35} v_{45} + v_{13} v_{23} v_{24} v_{25} v_{35} v_{45} + v_{13} + v_{24} v_{25} v_{34} v_{35} + v_{24} + v_{35}  
\end{dmath*}
\begin{dmath*}
p  = v_{12} v_{13} v_{35} + v_{13} v_{24} + v_{13} v_{35} v_{45} + v_{24} v_{25} v_{35} + v_{24} v_{34} v_{35} + 1 
\end{dmath*}
\end{dgroup*}
As above, $P=Av_{23}+B$ and $p=b$ where 
\begin{dgroup*}
\begin{dmath*}
A  = v_{13} v_{24} v_{25} v_{35} v_{45} 
\end{dmath*}
\end{dgroup*}
and both $B$ and $b$ do not have the variable $v_{23}$.
Since $A \succeq 1$, by \ref{lemma:L1}
\begin{dmath*}
\pfaff Q  \succeq  A+B -b = v_{12} v_{13} v_{24} v_{34} v_{35} + v_{12} v_{13} v_{35} v_{45} + v_{13} v_{24} v_{25} v_{35} v_{45} + v_{13} + v_{24} v_{25} v_{34} v_{35} + v_{24} + v_{35} - (v_{12} v_{13} v_{35} + v_{13} v_{24} + v_{13} v_{35} v_{45} + v_{24} v_{25} v_{35} + v_{24} v_{34} v_{35} + 1)
\end{dmath*}
Let $P,p$ denote the following polynomials in the remaining 7 $v$-variables: 
\begin{dgroup*}
\begin{dmath*}
P  = v_{12} v_{13} v_{24} v_{34} v_{35} + v_{12} v_{13} v_{35} v_{45} + v_{13} v_{24} v_{25} v_{35} v_{45} + v_{13} + v_{24} v_{25} v_{34} v_{35} + v_{24} + v_{35}  
\end{dmath*}
\begin{dmath*}
p  = v_{12} v_{13} v_{35} + v_{13} v_{24} + v_{13} v_{35} v_{45} + v_{24} v_{25} v_{35} + v_{24} v_{34} v_{35} + 1 
\end{dmath*}
\end{dgroup*}
Proceeding as above, 
$P=Av_{12}+B$ and $p=av_{12} +b$ where 
\begin{dgroup*}
\begin{dmath*}
A  = v_{13} v_{35} (v_{24} v_{34} + v_{45})
\end{dmath*}
\begin{dmath*}
a  = v_{13} v_{35} 
\end{dmath*}
\end{dgroup*}
and both $B$ and $b$ do not have the variable $v_{12}$.
Since $A-a \succeq 1$, by \ref{lemma:L1}
\begin{dmath*}
\pfaff Q  \succeq  A+B -a-b = v_{13} v_{24} v_{25} v_{35} v_{45} + v_{13} v_{24} v_{34} v_{35} + v_{13} + v_{24} v_{25} v_{34} v_{35} + v_{24} + v_{35} - (v_{13} v_{24} + v_{13} v_{35} + v_{24} v_{25} v_{35} + v_{24} v_{34} v_{35} + 1)
\end{dmath*}
Furthermore, let $P,p$ denote the following polynomials in the remaining 6 $v$-variables: 
\begin{dgroup*}
\begin{dmath*}
P  = v_{13} v_{24} v_{25} v_{35} v_{45} + v_{13} v_{24} v_{34} v_{35} + v_{13} + v_{24} v_{25} v_{34} v_{35} + v_{24} + v_{35}  
\end{dmath*}
\begin{dmath*}
p  = v_{13} v_{24} + v_{13} v_{35} + v_{24} v_{25} v_{35} + v_{24} v_{34} v_{35} + 1 
\end{dmath*}
\end{dgroup*}
Now, we eliminate $v_{45}$ by setting $P=Av_{45}+B$ and $p=b$ where 
\begin{dgroup*}
\begin{dmath*}
A  = v_{13} v_{24} v_{25} v_{35}
\end{dmath*}
\end{dgroup*}
and both $B$ and $b$ do not have the variable $v_{45}$.
Since $A\succeq 1$, by \ref{lemma:L1}
\begin{dmath*}
\pfaff Q  \succeq  A+B -b 
= v_{13} v_{24} v_{25} v_{35} + v_{13} v_{24} v_{34} v_{35} + v_{13} + v_{24} v_{25} v_{34} v_{35} + v_{24} + v_{35} - (v_{13} v_{24} + v_{13} v_{35} + v_{24} v_{25} v_{35} + v_{24} v_{34} v_{35} + 1)
\end{dmath*}
As above, let $P,p$ denote the following polynomials in the remaining 5 $v$-variables: 
\begin{dgroup*}
\begin{dmath*}
P  = v_{13} v_{24} v_{25} v_{35} + v_{13} v_{24} v_{34} v_{35} + v_{13} + v_{24} v_{25} v_{34} v_{35} + v_{24} + v_{35}  
\end{dmath*}
\begin{dmath*}
p  = v_{13} v_{24} + v_{13} v_{35} + v_{24} v_{25} v_{35} + v_{24} v_{34} v_{35} + 1 
\end{dmath*}
\end{dgroup*}
Now, $P=Av_{34}+B$ and $p=av_{34} +b$ where 
\begin{dgroup*}
\begin{dmath*}
A  = v_{24} v_{35} (v_{13} + v_{25})
\end{dmath*}
\begin{dmath*}
a  = v_{24} v_{35} 
\end{dmath*}
\end{dgroup*}
and both $B$ and $b$ do not have the variable $v_{34}$.
Since $A-a \succeq 1$, by \ref{lemma:L1}
\begin{dmath*}
\pfaff Q  \succeq  A+B -a-b = v_{13} v_{24} v_{25} v_{35} + v_{13} v_{24} v_{35} + v_{13} + v_{24} + v_{35} - (v_{13} v_{24} + v_{13} v_{35} + v_{24} v_{35} + 1)
\end{dmath*}
Let $P,p$ denote the following polynomials in the remaining 4 $v$-variables: 
\begin{dgroup*}
\begin{dmath*}
P  = v_{13} v_{24} v_{25} v_{35} + v_{13} v_{24} v_{35} + v_{13} + v_{24} + v_{35}  
\end{dmath*}
\begin{dmath*}
p  = v_{13} v_{24} + v_{13} v_{35} + v_{24} v_{35} + 1 
\end{dmath*}
\end{dgroup*}
Now, $P=Av_{25}+B$ and $p=b$ where 
\begin{dgroup*}
\begin{dmath*}
A  = v_{13} v_{24} v_{35} 
\end{dmath*}
\end{dgroup*}
and both $B$ and $b$ do not have the variable $v_{25}$.
Since $A \succeq 1$, by \ref{lemma:L1}
\begin{dmath*}
\pfaff Q  \succeq  A+B -b = 2 v_{13} v_{24} v_{35} + v_{13} + v_{24} + v_{35} - (v_{13} v_{24} + v_{13} v_{35} + v_{24} v_{35} + 1)
\end{dmath*}
Let $P,p$ denote the following polynomials in the remaining 3 $v$-variables: 
\begin{dgroup*}
\begin{dmath*}
P  = 2 v_{13} v_{24} v_{35} + v_{13} + v_{24} + v_{35}  
\end{dmath*}
\begin{dmath*}
p  = v_{13} v_{24} + v_{13} v_{35} + v_{24} v_{35} + 1 
\end{dmath*}
\end{dgroup*}
Now, $P=Av_{24}+B$ and $p=av_{24} +b$ where 
\begin{dgroup*}
\begin{dmath*}
A  =  2 v_{13} v_{35} + 1
\end{dmath*}
\begin{dmath*}
a  =  v_{13} + v_{35}
\end{dmath*}
\end{dgroup*}
and both $B$ and $b$ do not have the variable $v_{24}$.
Since $A-a \succeq 1$, by \ref{lemma:L1}
\begin{dmath*}
\pfaff Q  \succeq  A+B -a-b = v_{13} v_{35}  \succeq 1,
\end{dmath*}
which concludes the proof. 
\end{remark}

\begin{remark}
The above procedure has been implemented in \verb+python+: the source code of
the script and a few 
lines of documentation can be found on the following github URL: 
\verb+https://github.com/dlfer/pfaffianinequaliites.git+. 
There are approximately 500 lines of code, structured in 28 functions, with a minimal number of comments
meant to try to help to understand its logic. The unusual design choice of not using symbolic polynomial 
libraries, is due 
to the need of optimizing the algorithm without using too much memory: 
hence instead of using polynomials or monomials from external libraries, 
the polynomials are encoded as python lists of \verb+Counters+, which are much simpler and faster to manipulate.   
\end{remark}

\section{Results for Collinear Central Configurations}
\label{section:5}

Now we can apply the previous results to the Pfaffian occurring in the 
inverse problem for collinear central configurations. 

\begin{theo}
\label{theo:pfaffQ}
Given any $\vq\in \conf{n}{\RR}$ such that $q_1>q_2>\ldots>q_n$. 
Then the corresponding matrix $Q$  
(with entries defined by \eqref{eq:defQ}, depending on the potential \ref{eq:potentialalpha},
as 
$Q_{ij} = \alpha (q_i - q_j)^{-1-\alpha}$ for $i<j$)
is skew-symmetric and satisfies 
\aref{positive}, 
\aref{quadratic} 
and \aref{monotone} of \ref{defi:T}:  thus $Q\in \matrices_n$. 
\end{theo}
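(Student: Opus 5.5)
We check the three axioms of \ref{defi:T} one at a time, working with $Q_{ij} = f(q_i-q_j)$ for $i<j$, where $f(t) = \alpha t^{-1-\alpha}$ on $t>0$. With the ordering $q_1>q_2>\ldots>q_n$, every difference $q_i-q_j$ with $i<j$ is positive, so $Q_{ij}>0$. Skew-symmetry follows because $V=-\varphi'$ is odd, as noted after \eqref{eq:defQ}. This settles \aref{positive}.

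For \aref{monotone} we only need that $f$ is strictly decreasing on $(0,+\infty)$. In the first row, $j\mapsto q_1-q_j$ is increasing, so $j\mapsto Q_{1j}$ is decreasing; this gives $Q_{12}\geq Q_{13}\geq\ldots\geq Q_{1,n-1}$. In the last column, $j\mapsto q_j-q_n$ is decreasing, so $j\mapsto Q_{jn}$ is increasing; this gives $Q_{2n}\leq\ldots\leq Q_{n-1,n}$.

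The main step is \aref{quadratic}, and here we use that $\log f(t) = \log\alpha-(1+\alpha)\log t$ is convex on $(0,\infty)$. Fix $2\leq i<j\leq n-1$ and set $a=q_{i-1}-q_{j+1}$, $b=q_i-q_j$, $c=q_{i-1}-q_j$, $d=q_i-q_{j+1}$. Then $a+b=c+d$, and $b$ is strictly smaller than each of $a,c,d$, while $a$ is strictly larger than each of $b,c,d$. So $\{b,a\}$ is the extreme pair and $\{c,d\}$ is the inner pair, and both pairs have the same sum. The inequality $Q_{ij}Q_{i-1,j+1}\geq Q_{i-1,j}Q_{i,j+1}$ reads
\[
\log f(a)+\log f(b)\geq \log f(c)+\log f(d).
\]
For a convex function $g$ and $b\leq c,d\leq a$ with $a+b=c+d$, one has $g(a)+g(b)\geq g(c)+g(d)$. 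This is the standard majorization (Karamata) inequality for two points. We prove it directly: write $c=\lambda a+(1-\lambda)b$ and $d=(1-\lambda)a+\lambda b$ with $\lambda\in[0,1]$, apply convexity to $g(c)$ and to $g(d)$, and add the two inequalities.

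For this power potential one can also check the case by hand, since the inequality becomes $(ab)^{-1-\alpha}\geq (cd)^{-1-\alpha}$, i.e.\ $ab\leq cd$. Because $cd-ab=(q_{i-1}-q_i)(q_j-q_{j+1})>0$, this holds. The only real obstacle is bookkeeping: matching the indices of the $2\times 2$ minor in \aref{quadratic} to the correct differences, and checking which pair is the extreme one. Once that identification is made, the argument goes through for any $V$ that is positive, decreasing and log-convex. It therefore also prepares the general statement \ref{theo:2}.
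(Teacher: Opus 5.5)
Your proof is correct. For \aref{positive} and \aref{monotone} it is the paper's argument. For \aref{quadratic}, the paper proves this theorem only by the direct route that you give as a side check. It rewrites $Q_{ij}Q_{i-1,j+1}\geq Q_{i-1,j}Q_{i,j+1}$ as $(q_i-q_j)(q_{i-1}-q_{j+1})\leq(q_{i-1}-q_j)(q_i-q_{j+1})$ and expands this down to $q_{j+1}(q_{i-1}-q_i)\leq q_j(q_{i-1}-q_i)$, which is your identity $cd-ab=(q_{i-1}-q_i)(q_j-q_{j+1})>0$.

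Your primary argument (log-convexity of $f$ plus the two-point majorization inequality) is instead how the paper proves the general Theorem \ref{theo:2}. There $q_{i-1}-q_j$ and $q_i-q_{j+1}$ are written as convex combinations of $q_{i-1}-q_{j+1}$ and $q_i-q_j$ with weight $\frac{q_{i-1}-q_i}{(q_{i-1}-q_i)+(q_j-q_{j+1})}$. This is exactly your $\lambda$ from $c=\lambda a+(1-\lambda)b$, and the two convexity inequalities are then added.

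The trade-off is as follows. The direct computation is shorter, but it depends on the multiplicativity of the power law, which collapses the product inequality to $ab\leq cd$. The majorization route costs one small lemma but uses only convexity of $\log V$ and monotonicity of $V$. So your write-up establishes Theorem \ref{theo:pfaffQ} and Theorem \ref{theo:2} in a single pass.
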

\begin{proof}
The matrix $Q$ is clearly antisymmetric. 
If $i<j$ then $q_i > q_j$ and hence
$Q_{ij} = \alpha (q_i - q_j)^{-1-\alpha}>0$, hence \aref{positive} holds. 

For any $1< i<j < n$ the following hold:
\begin{align*}
Q_{i,j} Q_{i-1,j+1}  & = \alpha^2 (q_i-q_j)^{-1-\alpha} (q_{i-1} - q_{j+1})^{-1-\alpha} \\
Q_{i-1,j} Q_{i,j+1} & =  \alpha^2 (q_{i-1} - q_j)^{-1-\alpha} (q_i - q_{j+1})^{-1-\alpha} \\
\therefore Q_{i,j} Q_{i-1,j+1} & \geq   Q_{i-1,j} Q_{i,j+1} \\
\iff 
  (q_i-q_j)^{-1-\alpha} (q_{i-1} - q_{j+1})^{-1-\alpha} 
 & \geq   (q_{i-1} - q_j)^{-1-\alpha} (q_i - q_{j+1})^{-1-\alpha} \\
 \iff
  (q_i-q_j)(q_{i-1} - q_{j+1})
 & \leq   (q_{i-1} - q_j) (q_i - q_{j+1}) \\
\left( a := q_{i-1}, b := q_i, c := q_j, d := q_{j+1}  \right.
 &  \left. \implies   a>b>c>d \right) \\
 \iff
 (b-c)(a-d) \leq (a - c)(b-d)  & 
 \iff 
 ba - bd - ca + cd \leq ab - ad - cb + cd \\
 \iff 
  - bd - ca  \leq  - ad - cb   & \iff 
  ad - bd \leq ac - bc\\
  \iff d(a-b) \leq c(a-b) & \iff d \leq c  
\Longleftarrow d < c,  
\end{align*}
and hence \aref{quadratic} holds. 

Finally, 
\begin{align*}
(q_1 - q_2) & \leq \ldots 
\leq (q_1 - q_j) \leq \ldots \leq (q_1-q_{n-1}) \\
\implies (q_1 - q_2)^{-1-\alpha} &  \geq \ldots 
\geq (q_1 - q_j)^{-1-\alpha} \geq \ldots \geq (q_1-q_{n-1})^{-1-\alpha} \\
\end{align*}
and similarly
\begin{align*}
(q_2-q_n) & \geq \ldots \geq (q_j - q_n) \geq  \ldots \geq 
(q_{n-1} - q_n) \\
\implies 
(q_2-q_n)^{-1-\alpha} & \leq \ldots \leq (q_j - q_n)^{-1-\alpha} \leq  \ldots \leq 
(q_{n-1} - q_n)^{-1-\alpha}, \\
\end{align*}
which implies that \aref{monotone} holds.
Thus, all the properties requested in \ref{defi:T} hold true. 
\end{proof}

More generally, 
assume that the entries of  a skew-symmetric matrix $Q$ can be
written as in \eqref{eq:defQ} as $Q_{ij} = V(q_i - q_j)$ for all $i<j$, for a suitable 
odd function $V\from (0,+\infty) \to (0,+\infty)$; for example, 
assume that the potential $U$ is written in terms of a function $\varphi$
(cf. \eqref{eq:potentialalpha}) whose 
derivative (up to sign) in
\eqref{eq:defQ} yields such function $V$.
The $\alpha$-homogeneous case corresponds to $V(t) = \alpha \dfrac{t}{\abs{t}^{\alpha+2}}$. 
Assume that $V$ 
satisfies  the following two
properties:
\begin{axioms}{V}
\item\label{V:decreasing} $V$ is decreasing: $x< y \implies V(x) \geq V(y)$. 
\item\label{V:logconvex} $V$ is log-convex: $ \log V(x) $ is a convex function
of $x$. 
\end{axioms}
Then, the following theorem, which generalizes \ref{theo:pfaffQ},  holds.
\begin{theo}
\label{theo:2}
Assume that the function
  $V$ is decreasing \aref{V:decreasing} and log-convex \aref{V:logconvex}. 
Given $n$ real numbers $q_j$, $j=1,\ldots, n$, such that $q_1 > q_2 > \ldots
q_n$, and given
the anti-symmetric matrix $Q$ with entries
$Q_{ij}$ defined by $Q_{ij} = V(q_i - q_j )$ for all $1\leq i <  j \leq n$,
then 
$Q\in \matrices_n$. 
\end{theo}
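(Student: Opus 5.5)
We verify the three axioms of Definition \ref{defi:T} directly, following the same pattern as the proof of Theorem \ref{theo:pfaffQ}, but replacing explicit power computations with the two structural hypotheses \aref{V:decreasing} and \aref{V:logconvex}. Antisymmetry holds by construction. For \aref{positive}: if $i<j$ then $q_i-q_j>0$, and $V$ takes values in $(0,+\infty)$ on positive arguments, so $Q_{ij}>0$.

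For \aref{monotone} we use only that $V$ is decreasing. The first row satisfies $0<q_1-q_2<q_1-q_3<\dots<q_1-q_{n-1}$. The last column satisfies $q_2-q_n>q_3-q_n>\dots>q_{n-1}-q_n>0$. Applying \aref{V:decreasing} to these chains gives $Q_{12}\ge\dots\ge Q_{1,n-1}$ and $Q_{2n}\le\dots\le Q_{n-1,n}$.

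The main step is \aref{quadratic}. For $2\le i<j\le n-1$ set $a=q_{i-1}>b=q_i>c=q_j>d=q_{j+1}$. We must show
\[
V(b-c)\,V(a-d)\ \ge\ V(a-c)\,V(b-d).
\]
Write $x_1=b-c$, $x_4=a-d$, $x_2=a-c$, $x_3=b-d$, all positive. These satisfy $x_1+x_4=x_2+x_3$ and $x_1<\min(x_2,x_3)\le\max(x_2,x_3)<x_4$. So $(x_1,x_4)$ is a more spread-out pair with the same sum as $(x_2,x_3)$. Setting $f=\log V$, which is convex by \aref{V:logconvex}, the inequality becomes $f(x_1)+f(x_4)\ge f(x_2)+f(x_3)$. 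This is the standard majorization (Karamata) inequality for two points. Concretely, write $x_2=\lambda x_1+(1-\lambda)x_4$ and $x_3=(1-\lambda)x_1+\lambda x_4$ with $\lambda\in[0,1]$; the identity $x_2+x_3=x_1+x_4$ forces the coefficients of $x_3$ to be complementary. Convexity then gives $f(x_2)\le\lambda f(x_1)+(1-\lambda)f(x_4)$ and $f(x_3)\le(1-\lambda)f(x_1)+\lambda f(x_4)$, and adding the two yields the claim.

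Exponentiating recovers $Q_{ij}Q_{i-1,j+1}\ge Q_{i-1,j}Q_{i,j+1}$, which is \aref{quadratic}. This step is the only one where log-convexity is needed, and the only real care point is checking the ordering $x_1\le x_2,x_3\le x_4$ so that the convex-combination representation is valid. Note also that decreasing monotonicity is not needed here, only convexity of $\log V$. Hence $Q\in\matrices_n$.
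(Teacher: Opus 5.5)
Your proof is correct and follows essentially the same route as the paper. \aref{positive} and \aref{monotone} come from positivity and monotonicity of $V$. For \aref{quadratic}, $q_{i-1}-q_j$ and $q_i-q_{j+1}$ are written as convex combinations of $q_i-q_j$ and $q_{i-1}-q_{j+1}$ with complementary weights, and the two log-convexity inequalities are added. The only difference is that the paper makes the weights explicit ($\frac{a}{a+b}$ and $\frac{b}{a+b}$, with $a=q_{i-1}-q_i$, $b=q_j-q_{j+1}$), while your two-point majorization framing leaves them implicit.
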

\begin{proof}
By definition and \aref{V:decreasing}, properties \aref{positive} and \aref{monotone}
of \ref{defi:T} hold true.  It is left to prove \aref{quadratic}. 
Fix two indices $i = 2,\ldots, n-2$, and $ j = i+1 \ldots n-1$:
\begin{align*}  
Q_{i,j}Q_{i-1,j+1} & \geq  Q_{i-1,j} Q_{i,j+1}   \\
\iff V(q_i - q_j) V(q_{i-1} - q_{j+1} )  & \geq  V ( q_{i-1}- q_{j} ) V(q_i- q_{j+1} ) \\
\iff \log V(q_i - q_j) + \log V(q_{i-1} - q_{j+1} )  & \geq  \log V ( q_{i-1}- q_{j} ) + \log  V(q_i- q_{j+1} ) \\
\end{align*}
and the last inequality follows from \aref{V:logconvex}:
after defining
$a  := q_{i-1} - q_i$, 
$b  := q_{j} - q_{j+1} $, and 
$q_{ij} :=  q_i - q_j $, one has
\begin{align*}
q_{i-1} - q_{j} & = q_{i-1} - q_i  + q_{i} - q_j = a + q_{ij} \\
&  = \dfrac{a+b}{a+b}a + \dfrac{a+b}{a+b}q_{ij} 
 =  (a+b) \dfrac{a}{a+b} + \dfrac{a}{a+b} q_{ij} + \dfrac{b}{a+b} q_{ij} \\
& = \dfrac{a}{a+b} (a+b+q_{ij}) + \dfrac{b}{a+b} q_{ij} \\ 
& = 
 \dfrac{a}{a+b} (q_{i-1} - q_{j+1} )
+ \dfrac{ b }{a+b}  (q_i - q_j),
\end{align*}
and similarly 
\begin{align*} 
q_i - q_{j+1} & = q_i - q_j + q_j - q_{j+1} = q_{ij} + b \\
& = \frac{a+b}{a+b} q_{ij} + \frac{a+b}{a+b} b 
 = \frac{a}{a+b} q_{ij} + \frac{b}{a+b} q_{ij} + \frac{b}{a+b} (a+b) \\
& = \frac{a}{a+b} q_{ij} + \frac{b}{a+b} (q_{ij} + a + b ) \\
& = \frac{a}{a+b}  (q_i - q_j) 
+  \frac{b}{a+b} (q_{i-1} - q_{j+1} );
\end{align*}
and hence by \aref{V:logconvex} 
\begin{align*}
\log V (q_{i-1} - q_{j} ) & \leq 
\dfrac{a}{a+b} \log V (q_{i-1} - q_{j+1} )
+ \dfrac{b}{a+b}  \log V (q_i - q_j),\\
\log V ( q_i - q_{j+1} )  & \leq  
\dfrac{b}{a+b} \log V (q_{i-1} - q_{j+1} ) 
+  \dfrac{a}{a+b} \log V (q_i - q_j ),
\end{align*} 
which implies the claim  by adding the two last inequalities. 
\end{proof}

\begin{coro}
\label{coro:main}
For $n$ even, $n\leq 14$.
Assume that the potential $U$ is written as 
$U(\vq)  
= 
\sum_{1\leq i \leq j \leq n} {m_i m_j} \varphi(q_i - q_j)$,
where  $\varphi\from \RR \to \RR$ is a given even function
such that $V(t) = -\varphi'(t)$  satisfies \aref{V:decreasing} 
(decreasing) and \aref{V:logconvex} (log-convex). 
Then, if $\vq$ is a configuration such that $q_1 > q_2> \ldots > q_n$, 
the 
matrix $Q$ 
(with entries defined by \eqref{eq:defQ} as $Q_{ij} = V(q_i-q_j)$)
satisfies 
the following inequality
\begin{equation}
\label{pfaffianineq}
\pfaff Q 
 > 0. 
\end{equation}
\end{coro}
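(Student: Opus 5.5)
The plan is to combine Theorem \ref{theo:2} with Corollary \ref{coro:mainineq}. The corollary's hypotheses are exactly those of Theorem \ref{theo:2}. The matrix $Q$ has entries $Q_{ij} = -\varphi'(q_i-q_j) = V(q_i-q_j)$ for $i<j$. Since $\varphi$ is even, $\varphi'$ and hence $V$ are odd, so $Q_{ji} = V(q_j-q_i) = -V(q_i-q_j) = -Q_{ij}$ and $Q$ is anti-symmetric. Its upper-triangular entries are values of $V$ at the positive arguments $q_i-q_j>0$, since $q_1>q_2>\ldots>q_n$.

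First, I invoke Theorem \ref{theo:2}. Because $V$ satisfies \aref{V:decreasing} and \aref{V:logconvex}, it gives $Q\in\matrices_n$, so \aref{positive}, \aref{quadratic} and \aref{monotone} all hold. One point needs checking here. Log-convexity only makes sense if $V>0$ on $(0,+\infty)$, and this positivity is what yields \aref{positive}. I will read \aref{V:logconvex} as including $V>0$ on the positive half-line, as the paper does implicitly when it takes $V\from(0,+\infty)\to(0,+\infty)$.

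Second, since $n$ is even and $n\le 14$, Corollary \ref{coro:mainineq} applies to $Q\in\matrices_n$. It gives
\[
\pfaff Q \geq \left( \dfrac{Q_{1,n-1}}{Q_{1n}Q_{n-1,n}} \right)^{(n-2)/2} \prod_{j=1}^{n-1} Q_{jn}.
\]
Every factor on the right is a positive entry by \aref{positive}, so the right-hand side is strictly positive and $\pfaff Q>0$, which is \eqref{pfaffianineq}.

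The whole argument is a short chain of earlier results. The only real content beyond bookkeeping is the computer-assisted Theorem \ref{theo:n14}, which sits inside Corollary \ref{coro:mainineq}; the main obstacle has therefore already been handled. The only delicate point left is the domain and positivity of $V$ discussed above. I would make it explicit by restricting $V$ to $(0,+\infty)$, where it is positive, decreasing and log-convex, and noting that only these values enter $Q_{ij}$ for $i<j$.
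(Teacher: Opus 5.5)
Your proof is correct and follows the paper's own argument. You first establish $Q\in\matrices_n$ and then apply Corollary~\ref{coro:mainineq}, whose right-hand side is a product of positive entries. Citing Theorem~\ref{theo:2} for $Q\in\matrices_n$ is the right reference: the paper's proof cites Theorem~\ref{theo:pfaffQ}, which only covers the homogeneous case. Your remark that positivity of $V$ on $(0,+\infty)$ has to be assumed in order to get \aref{positive} is a sensible clarification.
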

\begin{proof}
By \ref{theo:pfaffQ}, 
the matrix $Q$ belongs to $\matrices_n$. 
Hence by Corollary \ref{coro:mainineq} 
inequality \ref{eq:mainineq} holds, 
which can be written as 
\begin{equation}
\label{eq:ineqinter}
\pfaff Q \geq 
\left( \dfrac{Q_{1,n-1}}{Q_{1n}Q_{n-1,n}} \right)^{(n-2)/2} 
\left( \prod_{j=1,\ldots, n-1} Q_{jn}  \right) > 0 
\end{equation}
where $Q_{ij} = V(q_i - q_j)$ for $i<j$, and therefore the claim.
\end{proof}

\begin{coro}
\label{coro:alphahomo}
For even $n\leq 14$ and $\alpha>0$, if $U$ is a homogeneous potential of type \eqref{eq:potentialalpha},
$U(\vq) =  \sum_{1\leq i \leq j \leq n} \frac{m_i m_j}{\abs{q_i - q_j}^\alpha}$,
and $q_1>q_2>\ldots > q_n$, 
then the Pfaffian $\pfaff Q$ of the matrix  $Q$ 
defined as in \eqref{eq:defQ} by
$Q_{ij} =
\alpha \frac{q_i - q_j}{\abs{q_i - q_{j}}^{\alpha+2} }$
satisfies the inequality 
\begin{equation}
\label{pfaffianineqalpha}
\pfaff Q \geq 
\alpha^{n/2}
\left[
\left( 
\dfrac{1}{q_{n-1} - q_n} - \dfrac{1}{q_1 - q_n} 
\right)^{(n-2)/2} 
\left( \prod_{j=1,\ldots, n-1} 
(q_j - q_n)
\right)
\right]^{-1-\alpha} > 0. 
\end{equation} 
\end{coro}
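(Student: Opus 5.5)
The plan is to specialize Corollary \ref{coro:mainineq} to the homogeneous case and then simplify the right-hand side of \eqref{eq:mainineq} explicitly.

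First, the hypothesis $Q \in \matrices_n$ must be checked. For $q_1>q_2>\ldots>q_n$ and $i<j$ we have $Q_{ij} = \alpha (q_i-q_j)^{-1-\alpha}$. Theorem \ref{theo:pfaffQ} then gives $Q\in\matrices_n$ directly. Alternatively, one can invoke Theorem \ref{theo:2} with $V(t)=\alpha t^{-1-\alpha}$ on $(0,+\infty)$: this $V$ is positive and decreasing, and $\log V(t) = \log\alpha - (1+\alpha)\log t$ is convex. Since $n$ is even and $n\le 14$, Corollary \ref{coro:mainineq} applies and yields
\[
\pfaff Q \geq \left( \dfrac{Q_{1,n-1}}{Q_{1n}Q_{n-1,n}} \right)^{(n-2)/2}\prod_{j=1}^{n-1} Q_{jn}.
\]

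The remaining work is a computation with the entries of $Q$. The first factor is
\[
\dfrac{Q_{1,n-1}}{Q_{1n}Q_{n-1,n}} = \alpha^{-1}\left[\dfrac{(q_1-q_n)(q_{n-1}-q_n)}{q_1-q_{n-1}}\right]^{1+\alpha} = \alpha^{-1}\left(\dfrac{1}{q_{n-1}-q_n}-\dfrac{1}{q_1-q_n}\right)^{-1-\alpha}.
\]
The last equality uses the identity $\frac{1}{q_{n-1}-q_n}-\frac{1}{q_1-q_n} = \frac{q_1-q_{n-1}}{(q_{n-1}-q_n)(q_1-q_n)}$.

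The second factor is $\prod_{j=1}^{n-1}Q_{jn} = \alpha^{n-1}\prod_{j=1}^{n-1}(q_j-q_n)^{-1-\alpha}$.

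Multiplying the two factors, the total power of $\alpha$ is $(n-1)-\frac{n-2}{2}=\frac n2$. Every remaining term carries the common exponent $-1-\alpha$, so the product equals the right-hand side of \eqref{pfaffianineqalpha}.

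Strict positivity holds because $q_1>q_{n-1}$ makes the bracketed difference positive, and every $q_j-q_n$ is positive. There is no real obstacle here; the only care needed is bookkeeping the powers of $\alpha$ and checking the sign of the difference $\frac{1}{q_{n-1}-q_n}-\frac{1}{q_1-q_n}$.
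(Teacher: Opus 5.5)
Your proposal is correct and follows the paper's proof. Like the paper, you get $Q\in\matrices_n$ from Theorem \ref{theo:pfaffQ}, apply Corollary \ref{coro:mainineq}, and rewrite $Q_{1,n-1}/(Q_{1n}Q_{n-1,n})$ with the same identity; your explicit count of the power of $\alpha$, $(n-1)-\frac{n-2}{2}=\frac{n}{2}$, fills in the step the paper summarizes as ``the claim follows.''
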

\begin{proof}
Because of Theorem \ref{theo:pfaffQ}, inequality \ref{coro:mainineq}
(which is \ref{eq:ineqinter}) holds. 
Note that since $q_1>q_2>\ldots >q_n$, 
\begin{align*}
\dfrac{Q_{1,n-1}}{Q_{1n}Q_{n-1,n}} &  = \alpha^{-1} 
\left( 
\dfrac{ q_1 - q_{n-1} }{ (q_1 - q_n)(q_{n-1} - q_n) }
\right)^{-\alpha-1}  \\
& = 
\alpha^{-1} \left(
\dfrac{1}{q_{n-1} - q_n} - \dfrac{1}{q_1 - q_n}
\right)^{-\alpha - 1} 
\end{align*} 
and the claim follows. 
\end{proof}

\begin{remark}
It is clear that the homogeneous interaction function $\varphi(t) = \abs{t}^{-\alpha}$ yield a log-convex and decreasing 
function $V = -\varphi'(t)$. 
On the other hand, a finite sum of potentials with different homogeneities is not in general homogeneous, 
while it may yield a log-convex and decreasing function $V$. 
For $n =  6$ (and $8 \leq n\leq 14$ computer-assisted)
it is thus proven that 
$\pfaff Q>0$ for a potential $U$ which is not necessarily homogeneous.  
\end{remark}

\section{Declarations}
No funding was received to assist with the preparation of this manuscript.
The author has no financial or proprietary interests in any material discussed
in this article.

\printbibliography

@article{AlbouyInverseProblemCollinear2000,
  author =        {Albouy, Alain and Moeckel, Richard},
  journal =       {Celestial Mechanics and Dynamical Astronomy},
  month =         sep,
  number =        {2},
  pages =         {77--91},
  title =         {The {{Inverse Problem}} for {{Collinear Central
                   Configurations}}},
  volume =        {77},
  year =          {2000},
  doi =           {10.1023/A:1008345830461},
  issn =          {0923-2958, 1572-9478},
}

@article{Buchanancertaindeterminantsconnected1909,
  author =        {Buchanan, H. E.},
  journal =       {Bulletin of the American Mathematical Society},
  month =         feb,
  number =        {5},
  pages =         {227--232},
  title =         {On Certain Determinants Connected with a Problem in
                   Celestial Mechanics},
  volume =        {15},
  year =          {1909},
  doi =           {10.1090/S0002-9904-1909-01744-6},
  issn =          {0002-9904},
}

@article{xieAnalyticalProofCertain2014a,
  author =        {Xie, Zhifu},
  journal =       {Celestial Mechanics and Dynamical Astronomy},
  month =         jan,
  number =        {1},
  pages =         {89--97},
  title =         {An Analytical Proof on Certain Determinants Connected
                   with the Collinear Central Configurations in the
                   $n$-Body Problem},
  volume =        {118},
  year =          {2014},
  doi =           {10.1007/s10569-013-9525-4},
  issn =          {1572-9478},
}

@article{xieRemarksInverseProblem2022b,
  author =        {Xie, Zhifu},
  journal =       {Electronic Research Archive},
  number =        {7},
  pages =         {2540--2549},
  title =         {Remarks on the Inverse Problem of the Collinear
                   Central Configurations in the $ {{N}}$-Body
                   Problem},
  volume =        {30},
  year =          {2022},
  doi =           {10.3934/era.2022130},
  issn =          {2688-1594},
}

@article{ferrarioPfaffiansInverseProblem2020c,
  author =        {Ferrario, D. L.},
  journal =       {Celestial Mechanics and Dynamical Astronomy},
  month =         jun,
  number =        {6-7},
  pages =         {32},
  title =         {Pfaffians and the Inverse Problem for Collinear
                   Central Configurations},
  volume =        {132},
  year =          {2020},
  doi =           {10.1007/s10569-020-09975-3},
  issn =          {0923-2958, 1572-9478},
}

@article{ferrarioMultivaluedFixedPoints2020a,
  author =        {Ferrario, D. L.},
  journal =       {Rendiconti del Seminario Matematico. Universit\`a e
                   Politecnico Torino},
  number =        {2},
  pages =         {77--88},
  title =         {Multi-Valued Fixed Points and the Inverse Problem for
                   Central Configurations},
  volume =        {78},
  year =          {2020},
  issn =          {0373-1243,2704-999X},
}

@article{Moultonstraightlinesolutions1910,
  author =        {Moulton, F. R.},
  journal =       {Annals of Mathematics. Second Series},
  number =        {1},
  pages =         {1--17},
  title =         {The Straight Line Solutions of the Problem of $n$
                   Bodies},
  volume =        {12},
  year =          {1910},
  issn =          {0003-486X},
}

@article{knuthOverlappingPfaffians1996,
  author =        {Knuth, Donald E.},
  journal =       {Electronic Journal of Combinatorics},
  number =        {2},
  pages =         {Research Paper 5, approx. 13},
  title =         {Overlapping {{Pfaffians}}},
  volume =        {3},
  year =          {1996},
  issn =          {1077-8926},
}

@article{cayley1849determinants,
  author =        {Cayley, Arthur},
  journal =       {J. Reine Angew. Math.},
  pages =         {93},
  title =         {Sur Les D\'eterminants Gauches},
  volume =        {38},
  year =          {1849},
}

\end{document}